\theoremstyle{definition}
\newtheorem{theorem}{Theorem}[section]
\newtheorem{theoremx}{Theorem}
\numberwithin{equation}{section}
\newtheorem{corollary}[theorem]{Corollary}
\newtheorem{lemma}[theorem]{Lemma}
\newtheorem{proposition}[theorem]{Proposition}
\theoremstyle{definition}
\newtheorem{definition}[theorem]{Definition}
\newtheorem{dis}[theorem]{Discussion}
\newtheorem{conjecture}[theorem]{Conjecture}
\newtheorem{remark}[theorem]{Remark}
\newtheoremstyle{TheoremNum}
        {8pt}{8pt}              
        {\upshape}                      
        {}                              
        {\bfseries}                     
        {.}                             
        {.5em}                             
        {\theoremname{#1}\theoremnote{ \bfseries #3}}
  \theoremstyle{TheoremNum}
\newcommand{\m}{\mathfrak{m}}
\newcommand{\Spec}{\operatorname{Spec}}
\newcommand{\Ass}{\operatorname{Ass}}
\newcommand{\id}{\operatorname{id}}
\renewcommand{\leq}{\leqslant}
\renewcommand{\geq}{\geqslant}
\title{Expected resurgence of ideals defining Gorenstein rings}
\author[Grifo]{Elo\'isa Grifo}
\address{Department of Mathematics, University of California, Riverside, Riverside, CA 92521, USA}
\email{eloisa.grifo@ucr.edu}
\author[Huneke]{Craig Huneke}
\address{Department of Mathematics, University of Virginia, Charlottesville, VA 22904-4135, USA}
\email{huneke@virginia.edu}
\author[Mukundan]{Vivek Mukundan}
\address{Department of Mathematics, Indian Institute of Technology Delhi, Delhi - 110016, India}
\email{vivekm85@gmail.com}
\subjclass[2010]{Primary: 13A15. Secondary: 13H05}
\keywords{symbolic powers, containment problem, Harbourne's Conjecture, resurgence, Gorenstein ideals}
\begin{document}
\maketitle

\begin{abstract}
Building on previous work by the same authors, we show that certain ideals defining Gorenstein rings have expected resurgence, and thus satisfy the stable Harbourne Conjecture.
In prime characteristic, we can take any radical ideal defining a Gorenstein ring in a regular ring, provided its symbolic powers are given by saturations with the maximal ideal.
While this property is not suitable for reduction to characteristic $p$, we show that a similar result holds in equicharacteristic $0$ under the additional hypothesis that the symbolic Rees algebra of $I$ is noetherian.
\end{abstract}

\section{Introduction}\label{section intro}

In this paper we extend recent work by the same three authors \cite{GHM}, which studied what is called the stable Harbourne conjecture and its relationship to expected resurgence. Building on the sufficient conditions from \cite{GHM}, in the present paper we show that the stable Harbourne Conjecture holds for certain ideals defining Gorenstein rings. The original conjecture of Harbourne \cite{Seshadri,HaHu} concerns homogeneous ideals $I$ in $k[\mathbb{P}^n]$, $k$ a field, and their {\it symbolic powers} $I^{(n)} = \bigcap_{P\in\Ass(I)}(I^n R_P\cap R)$, and depends on the {\it big height} of $I$, the largest height (or codimension) of any minimal prime of $I$. We slightly rewrite the original conjecture here for radical ideals in any regular ring:

\begin{conjecture}[Harbourne]\label{harbourne} 
Let $I$ be a self-radical ideal of big height $c$ in a regular ring $R$.
Then for all $n \geqslant 1$,
$$I^{(cn-c+1)} \subseteq I^n.$$
\end{conjecture}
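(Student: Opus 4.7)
The plan is to attempt Harbourne's Conjecture in the form stated for all $n \geqslant 1$ by reducing to prime characteristic and sharpening the Ein--Lazarsfeld--Smith / Hochster--Huneke containment $I^{(cn)} \subseteq I^n$ by $c-1$ degrees. Because the statement is preserved under faithfully flat descent and standard specialization, I would first reduce to the case where $R$ is a regular local ring of characteristic $p > 0$ via reduction mod $p$, handling mixed characteristic by perfectoid techniques. The case $n = 1$ is trivial and serves only as a base case.

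In characteristic $p$, the starting point is the Frobenius inclusion $I^{(cq)} \subseteq I^{[q]}$ for $q = p^e$, which yields $I^{(cn)} \subseteq I^n$ via tight closure in the regular ring $R$. The proposed strategy for the sharper exponent $cn-c+1$ combines two ingredients. First, the elementary product containment
\[
I^{(cn-c+1)} \cdot I^{(c-1)} \subseteq I^{(cn)} \subseteq I^n
\]
reduces the conjecture to the problem of ``dividing out'' by $I^{(c-1)}$. Second, at the Frobenius level, for $f \in I^{(cn-c+1)}$ one has $f^q \in I^{((cn-c+1)q)}$, and the goal is to produce a test element $d$, not in any minimal prime of $I^n$, with $d \cdot f^q \in (I^n)^{[q]}$ for all large $q$, so that $f \in (I^n)^* = I^n$. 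The Frobenius trace map on the canonical module $\omega_R$ provides a natural candidate for such a $d$, since its surjectivity can recover exactly the extra degree of slack per Frobenius iteration while avoiding the minimal primes of $I$.

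The main obstacle is that this $c-1$ improvement is known to be false in general: explicit counterexamples exist for $n = 2$ and $c = 2$ coming from special configurations of points in $\PP^2$. Any realistic attempt at the conjecture as stated must therefore invoke an extra hypothesis that rules out such configurations. I expect the proof to split into two regimes: for $n \gg 0$ the expected-resurgence methods of \cite{GHM} handle the containment (this is the stable Harbourne conjecture), while the small-$n$ range --- especially $n = 2$ --- requires a genuinely new argument using, for instance, linkage, residual intersections, or the Frobenius trace under a Gorenstein-type hypothesis. Identifying the minimal hypothesis under which the small-$n$ argument goes through, and reconciling it with the known counterexamples, is the hardest and most delicate step, and is where I expect the bulk of the work to lie.
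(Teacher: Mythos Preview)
The statement you are attempting to prove is Harbourne's \emph{Conjecture}, and the paper does not prove it---indeed, the paper cannot prove it, because it is false in the generality stated. Immediately after stating Conjecture~\ref{harbourne}, the paper recalls the counterexample of Dumnicki, Szemberg and Tutaj-Gasi\'nska: a radical ideal $I$ of big height $2$ in $k[\PP^2]$ with $I^{(3)} \not\subseteq I^2$. You yourself note this obstacle in your third paragraph, and that observation should have stopped the attempt rather than prompting a search for extra hypotheses.

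Your proposal therefore has a fundamental gap: it is trying to establish a containment that provably fails for some ideals satisfying all the hypotheses of the statement. The reduction-to-characteristic-$p$ and tight-closure machinery you outline is sound for the weaker containment $I^{(cn)} \subseteq I^n$, but the ``dividing out by $I^{(c-1)}$'' step and the Frobenius-trace test-element argument cannot succeed in general, since if they did they would contradict the known counterexamples. The paper's actual contribution is not a proof of Conjecture~\ref{harbourne} but rather results toward the \emph{stable} version (for $n \gg 0$) under the additional hypothesis that $R/I$ is Gorenstein, together with conditions on the symbolic Rees algebra in equicharacteristic $0$. There is no proof in the paper to compare your proposal against.
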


This conjecture is part of a larger program to find the best possible $a$ for each $b$ such that $I^{(a)} \subseteq I^b$, a program known as the Containment Problem. On the one hand, the containment $I^{(cn)} \subseteq I^n$ holds for all $n \geqslant 1$ by \cite{ELS}, \cite{comparison}, and \cite{MaSchwede}, but it is easy to find examples where these values are not best possible. However, Conjecture \ref{harbourne} can fail; Dumnicki, Szemberg and Tutaj-Gasi\'nska \cite{counterexamples} found a self-radical ideal $I$ of big height $2$ that is the first counterexample to $I^{(3)} \subseteq I^2$ for certain configurations of points in projective space $\mathbb{P}^2$. This example has been extended to entire classes of counterexamples coming from very special configurations in $\mathbb{P}^n$ \cite{HaSeFermat,ResurgenceKleinWiman,MalaraSzpond,RealsCounterexample,BenCounterexample,DrabkinSeceleanu}.

However, Harbourne's Conjecture is satisfied by various classes of ideals: those defining general points in $\mathbb{P}^2$ \cite{BoH} and $\mathbb{P}^3$ \cite{Dumnicki2015}, squarefree monomial ideals, or more generally ideals defining equicharacteristic rings with mild singularities \cite{GrifoHuneke}, such as Veronese or determinantal rings. More precisely, $I$ satisfies Harbourne's Conjecture whenever $R/I$ is F-pure in characteristic $p$, or of dense F-pure type in equicharacteristic $0$ \cite{GrifoHuneke}. There are even classes of ideals satisfying Harbourne's Conjecture over certain singular rings \cite{GrifoMaSchwede}.

There are, however, no known counterexamples to the following stable version of the conjecture:

\begin{conjecture}[Stable Harbourne]
Let $I$ be a self-radical ideal of big height $c$ in a regular ring $R$. Then for all $n \gg 0$,
$$I^{(cn-c+1)} \subseteq I^n.$$
\end{conjecture}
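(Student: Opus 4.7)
The plan is to establish the \emph{expected resurgence} inequality $\rho(I) < c$, where $\rho(I) = \sup\{a/b : I^{(a)} \not\subseteq I^b\}$. This suffices because the ratio $(cn-c+1)/n = c - (c-1)/n$ approaches $c$ from below, so once $\rho(I) \leq c - \epsilon$ for some $\epsilon > 0$, the containment $I^{(cn-c+1)} \subseteq I^n$ holds for every $n > (c-1)/\epsilon$. Reducing Stable Harbourne to expected resurgence is precisely the framework developed in the authors' prior paper \cite{GHM}, so my first step would be to invoke that reduction explicitly and then focus the remainder of the argument on the strict inequality $\rho(I) < c$.

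To produce the strict gap, I would split by characteristic. In prime characteristic $p$, the idea is to refine the Ein--Lazarsfeld--Smith / Hochster--Huneke proof of $I^{(cn)} \subseteq I^n$ into an argument yielding $I^{(cn-s)} \subseteq I^n$ for a fixed $s > 0$ independent of $n$, by exploiting Frobenius compatibilities on $R/I$: when $R/I$ is F-pure, this refinement is exactly the mechanism of \cite{GrifoHuneke}, and the goal would be to extract an $s > 0$ from weaker F-singularity data (say, uniform bounds on the F-jumping numbers of the symbolic filtration, or from an asymptotic test-ideal comparison). In equicharacteristic zero, I would reduce modulo $p$ and apply the characteristic $p$ bound uniformly in $p$ via a standard spreading-out argument, maintaining control of the relevant F-invariants in the reductions; in mixed characteristic, the perfectoid substitutes for tight closure developed in and after \cite{MaSchwede} would play the analogous role.

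The hard part is that the statement as written is a well-known open problem. Every existing proof of a Stable Harbourne statement imposes an additional hypothesis on $R/I$: F-purity or dense F-pure type, squarefree monomial or determinantal structure, low-dimensional general point configurations, or (as in the present paper) Gorenstein plus a strong saturation condition. A general self-radical ideal in a regular ring can have arbitrarily bad singularities, and the counterexamples to Harbourne's original Conjecture~\ref{harbourne} show that one cannot hope for a bound on the symbolic defect that is uniform across the full range of $n$. A complete proof would require either a universal mechanism forcing $\rho(I) < c$ from the codimension assumption alone --- which no current technique provides --- or a reduction step that replaces an arbitrary radical ideal by one with controlled singularities while preserving both the big height $c$ and the resurgence. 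Absent such a mechanism, the realistic contribution of the present paper is to sidestep this obstacle by restricting to ideals whose quotients are Gorenstein (with a saturation or noetherianity hypothesis on the symbolic Rees algebra), and I would expect any proof in the generality stated to require a genuinely new idea beyond the F-singularity toolkit assembled above.
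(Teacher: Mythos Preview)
The statement you were asked to address is a \emph{conjecture}, not a theorem: the paper states it as the Stable Harbourne Conjecture and does not provide a proof of it in the stated generality. You correctly identify this in your third paragraph, noting that it is a well-known open problem and that every existing result imposes an additional hypothesis on $R/I$. That diagnosis is accurate, and so is your summary of what the present paper actually does: it restricts to ideals with $R/I$ Gorenstein (plus a saturation condition, and in equicharacteristic $0$ a noetherianity hypothesis on the symbolic Rees algebra) and deduces expected resurgence in those cases.

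Your first two paragraphs, however, read as though you are sketching a proof strategy for the conjecture itself, and that strategy does not work. The reduction to expected resurgence via \cite{GHM} is fine, but the substantive step --- extracting a uniform $s>0$ with $I^{(cn-s)}\subseteq I^n$ from ``weaker F-singularity data'' for an arbitrary radical ideal --- is exactly the missing ingredient that makes the conjecture open. No known bound on F-jumping numbers or asymptotic test ideals supplies this for general $I$, and your characteristic-zero spreading-out argument inherits the same gap, compounded by the fact (which the paper emphasizes in Section~\ref{section expected resurgence}) that the characteristic-$p$ bounds obtained by these methods typically depend on $p$ and hence do not survive reduction. So the honest assessment is: there is no proof here to compare, because neither you nor the paper proves the conjecture; you correctly recognize this, and the paper's contribution is precisely the restricted Gorenstein case you describe at the end.
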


This stable version of Harbourne's Conjecture holds for very general and generic point configurations in $\mathbb{P}^n$ \cite[Theorem 2.2 and Remark 2.3]{StefanYuXie}, for sufficiently large sets of general points in $\mathbb{P}^n$ \cite[Corollary 4.3]{ChudnovskyGeneralPoints}, for ideals defining space monomial curves $(t^a, t^b, t^c)$ over fields of characteristic other than $3$ \cite[Theorem 4.3]{GHM}, and for homogeneous ideals generated in small degree in equicharacteristic $0$ \cite[Theorem 3.1]{GHM}. Moreover, various sufficient conditions for the stable Harbourne Conjecture are given in \cite{GrifoStable,GHM}. Many of these results rely on the {\it resurgence} of $I$, as defined by Bocci and Harbourne \cite{BoH}:

\begin{definition}[Bocci-Harbourne \cite{BoH}]
The resurgence of an ideal $I$ is given by 
$$\rho(I)=\sup \left\lbrace \frac{m}{s} ~|~I^{(m)}\not\subseteq I^s \right\rbrace.$$
\end{definition}

Notice that in particular, if $m > \rho(I)\cdot r$, then one is guaranteed that $I^{(m)}\subseteq I^r$. Related invariants have also been studied, such as the asymptotic resurgence \cite{AsymptoticResurgence}, which can be computed via integral closures \cite{AsymptoticResurgenceIntClosure}. The resurgence can often be bounded by other invariants \cite[Theorem 1.2.1]{BoH}, and sometimes even computed explicitly without completely solving the Containment Problem \cite{resurgence2,DHNSST2015,ResurgenceKleinWiman}. This is particularly interesting because, as noted in \cite[Remark 2.5]{GrifoStable}, the stable Harbourne Conjecture follows immediately as long as the resurgence of $I$ is strictly less than its big height. In this case, we say that $I$ has {\it expected resurgence}. In this paper, we expand on the sufficient conditions for expected resurgence from \cite{GHM} to prove the following main results:

\begin{theoremx}[Theorem \ref{charpGorenstein} and Theorem \ref{thm Gorenstein expected resurgence}]
Let $I$ be a homogeneous ideal of big height $c \geq 2$ in a standard graded regular ring $R$ with homogeneous maximal ideal $\m$, or in a regular local ring $(R, \m)$, and assume that $R$ contains a field $k$. Suppose that:
\begin{itemize}
\item $I^{(n)} = \left( I^n : \m^\infty \right)$ for all $n \geqslant 1$, and
\item $R/I$ is Gorenstein.
\end{itemize}
Then $I$ has expected resurgence whenever $k$ has prime characteristic or the symbolic Rees algebra of $I$ is noetherian.
\end{theoremx}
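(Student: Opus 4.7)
The plan is to verify a sufficient condition for expected resurgence established in the prior paper \cite{GHM} of the same authors. That work provides criteria guaranteeing $\rho(I) < c$, and the natural route here is to check such a criterion using the two hypotheses at hand: Gorenstein quotient and symbolic-power-via-saturation. Under the saturation hypothesis the gap $I^{(n)}/I^n$ is identified with the local cohomology module $H^0_\m(R/I^n)$, so it suffices to establish a uniform (or at worst linearly controlled with small slope) bound on the $\m$-annihilator of $H^0_\m(R/I^n)$; equivalently, on how much of $I^{(n)}$ is pushed back into $I^n$ by multiplication by a power of $\m$.

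\textbf{Prime characteristic case.} In characteristic $p$, I would exploit the Frobenius together with the Gorenstein duality on $R/I$: since the canonical module $\omega_{R/I}$ is free of rank one over $R/I$, one expects a uniformly controlled Cartier-type (or Fedder-style) operator that compares $I^{[p^e]}$ to $I$ and, when propagated through the powers $I^n$, witnesses a containment of the form $\m^{a_n} (I^{(n)})^{[p^e]} \subseteq I^{n p^e}$ with $a_n$ bounded by a linear function of $n$ of slope strictly less than $c$. Combined with the standard inclusion $(I^{(m)})^{[p^e]} \subseteq I^{(m p^e)}$ and the saturation hypothesis (which is compatible with passage through the Frobenius at the level of $\m$-saturations), this should feed directly into the \cite{GHM} criterion to conclude $\rho(I) < c$.

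\textbf{Equicharacteristic zero and main obstacle.} Because the saturation property is not preserved under reduction modulo $p$, the characteristic-$p$ argument cannot be imported by spreading out, so instead one leverages noetherianity of the symbolic Rees algebra $\mathcal{R}_s(I) = \bigoplus_n I^{(n)} t^n$. Noetherianity yields an integer $N$ with $I^{(kN)} = (I^{(N)})^k$ for all $k$, reducing the infinite family of containment questions to one about the single ideal $J = I^{(N)}$ and its ordinary powers; applying the Gorenstein $+$ saturation hypotheses to $J$ should then produce a uniform $a$ with $\m^a I^{(kN)} \subseteq I^{kN}$ for all $k$, which again triggers the \cite{GHM} criterion. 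The chief obstacle throughout is the passage from the abstract Gorenstein structure on $R/I$ to explicit uniformly bounded containments for the ordinary and symbolic powers: Gorenstein duality applies naturally to $R/I$, not to the non-Cohen--Macaulay rings $R/I^n$, and therefore must be propagated through the entire filtration by powers; in the equicharacteristic-zero case there is the additional difficulty of extracting a quantitative bound from the purely qualitative noetherian hypothesis on $\mathcal{R}_s(I)$.
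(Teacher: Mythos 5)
Your overall framing (verify a sufficient criterion from \cite{GHM}, identify $I^{(n)}/I^n$ with $H^0_\m(R/I^n)$ via the saturation hypothesis) is a reasonable starting point, but the proposal has genuine gaps in both cases, and the characteristic-zero half is organized around a step that would not work.

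In prime characteristic the proposal gestures at ``Cartier-type or Fedder-style operators'' and a bound of the form $\m^{a_n}(I^{(n)})^{[p^e]}\subseteq I^{np^e}$ with $a_n$ of small linear slope, but this is not what the paper proves and it is not clear it could be made to work on its own. The paper's actual mechanism is linkage-theoretic and sharper: because $R/I$ and $R/I^{[p]}$ are both Gorenstein, the comparison map between their minimal free resolutions at the top spot is multiplication by an element $\Delta$ with $(I^{[p]}:I)=I^{[p]}+(\Delta)$ and $(I^{[p]}:\Delta)=I$. The engine of the argument is then the observation (Proposition~2.2) that $\Delta\cdot I\subseteq\m I^{[p]}$, which is extracted from the minimality of the resolution via the entries of the comparison map $\alpha_{c-1}$; combined with \cite[Theorem~3.1]{TakagiYoshida} this yields $I^{(cq-c+1)}\subseteq\m I^{[q]}$ for $q=p^e\gg0$, and then $I^{(cn)}\subseteq\m I^n$ for all $n$. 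Your outline neither identifies $\Delta$ nor explains where the extra $\m$-factor would come from; ``propagating Gorenstein duality through the filtration $I^n$'' is precisely the step your proposal flags as an obstacle without a mechanism to overcome it.

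In equicharacteristic zero the proposal goes wrong in two ways. First, it asserts that the characteristic-$p$ argument cannot be imported by spreading out; in fact the paper's characteristic-zero input \emph{is} obtained by reduction to characteristic $p$: the containment $I^{(cn)}\subseteq\m I^n$ does not involve $p$-power exponents and descends cleanly (Theorem~4.1), whereas only the containment $I^{(cq-c+1)}\subseteq\m I^{[q]}$ with $q=p^e>pc$ fails to reduce because its threshold grows with $p$. Second, and more seriously, the plan of reducing to $J=I^{(N)}$ and ``applying the Gorenstein $+$ saturation hypotheses to $J$'' cannot work: $R/J$ is in general not Gorenstein, so no such structural input is available for $J$. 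The noetherianity hypothesis is used differently in the paper: given $I^{(cn)}\subseteq\overline{\m I^n}$ (from the char-$p$ reduction) and an $l$ with $I^{(ln)}=(I^{(l)})^n$, one bootstraps from multiples of $l$ to get $I^{(cn)}\subseteq\overline{\m^{\lfloor n/l\rfloor}I^n}$ for all $n$, and this growing $\m$-power (not a uniform $\m^a$, which Swanson-type theorems would not give you with the slope you need) is what feeds a strengthened version of the \cite{GHM} criterion (Lemma~3.1). You should re-center the characteristic-zero proof around descending the characteristic-free statement $I^{(cn)}\subseteq\m I^n$ and then amplifying it via noetherianity, rather than transferring Gorenstein-ness to $I^{(N)}$.
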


In the prime characteristic $p$ setting, the key step is to show that there exists $q = p^e$ such that $I^{(cq-c+1)} \subseteq \m I^{q}$; in fact, $I^{(cq-c+1)} \subseteq \m I^{[q]}$. Since this statement depends heavily on the characteristic, it is not suitable for reduction to characteristic $p$ techniques, which is why an additional assumption is needed in equicharacteristic $0$. However, the following strengthening of \cite{ELS,comparison} holds independently of the characteristic:

\begin{theoremx}[Theorem \ref{charpGorenstein} and Theorem \ref{theorem Gorenstein char 0}]
	Let $I$ be a homogeneous ideal of height $c \geq 2$ in a standard graded regular ring $R$ with homogeneous maximal ideal $\m$, or in a regular local ring $(R, \m)$, and assume that $R$ contains a field $k$. If $R/I$ is Gorenstein, then
	$$I^{(cn)} \subseteq \m I^n$$
	for all $n \geqslant 1$.
\end{theoremx}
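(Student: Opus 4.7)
The plan is to establish the statement in positive characteristic and then transfer it to equicharacteristic zero, under the noetherianity hypothesis on the symbolic Rees algebra, via standard reduction modulo $p$.

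In characteristic $p > 0$, the strategy is to refine the Hochster--Huneke containment $I^{(cnq)} \subseteq (I^n)^{[q]}$ (established for every $q=p^e$ by localizing at each minimal prime of $I$, where $I$ becomes a complete intersection, and pigeonholing monomials in a regular system of parameters) to the strictly stronger
\[ I^{(cnq)} \subseteq (\m I^n)^{[q]}. \]
Once this is in hand, for $f \in I^{(cn)}$ one has $f^q \in (I^{(cn)})^q \subseteq I^{(cnq)} \subseteq (\m I^n)^{[q]}$ for every $q$, and the triviality of tight closure in the regular ring $R$ forces $f \in \m I^n$.

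The local ingredient is easy: at a minimal prime $\p$ of $I$ and a regular system of parameters $x_1,\dots,x_c$ for $I_\p$, any monomial of total degree $cnq$ can be partitioned into $n$ pieces each containing some $x_i^q$ together with a residual factor of strictly positive degree (this is where $c \geq 2$ is used), hence lying in $(\m_\p I_\p^n)^{[q]}$. The real issue is globalization: the quotient $I^n/\m I^n$ is a $k$-vector space supported only at $\m$, so the local containment at minimal primes of $I$ is blind to it. This is where the Gorenstein hypothesis enters. Combining the saturation hypothesis $I^{(n)}=(I^n:\m^\infty)$ with the self-duality of the minimal free resolution of $R/I$ (equivalently, the freeness of $\omega_{R/I}$), the socles of $\H^c_\m\bigl(R/(\m I^n)^{[q]}\bigr)$ are pinned down, and any candidate obstruction class lifted from $f \in I^{(cnq)}$ modulo $(\m I^n)^{[q]}$ must vanish by a Frobenius-compatible duality argument.

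For equicharacteristic zero, noetherianity of the symbolic Rees algebra $\bigoplus_n I^{(n)}t^n$ reduces the containment $I^{(cn)}\subseteq \m I^n$ to a check for the finitely many $n$ arising as degrees of a set of generators. For each such $n$ one descends $(R, I, \m)$ to a finitely generated $\ZZ$-algebra model preserving Gorenstein-ness of $R/I$ and the symbolic-power structure, reduces modulo a generic prime $p$, applies the characteristic-$p$ case, and lifts back by generic freeness.

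The main obstacle will be the Gorenstein-driven globalization in positive characteristic: the discrepancy between $(I^n)^{[q]}$ and $(\m I^n)^{[q]}$ is $\m$-primary information, invisible to localization at minimal primes of $I$, so Gorenstein-ness must be used in an essential way --- most plausibly via the rank-one canonical module $\omega_{R/I}$ and the resulting control on socles in relevant local cohomology modules.
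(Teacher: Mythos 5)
Your central intermediate claim --- that $I^{(cnq)} \subseteq (\m I^n)^{[q]}$ for every $q = p^e$ --- is false, and it fails already in the simplest possible example. Take $R = k[x,y,z]$, $\m = (x,y,z)$, $I = (x,y)$, so $R/I \cong k[z]$ is Gorenstein and $c = 2$. For $n=1$ we would need $I^{2q} \subseteq (\m I)^{[q]} = (x^{2q},\, x^q y^q,\, y^{2q},\, x^q z^q,\, y^q z^q)$, but $x^{2q-1}y \in I^{2q}$ is a degree-$2q$ monomial divisible by none of these generators. The reason is that $(\m I^n)^{[q]} = \m^{[q]}(I^n)^{[q]}$ is vastly smaller than what is true; the paper's Theorem~\ref{charpGorenstein} obtains, after considerable work, $I^{(cq-c+1)} \subseteq \m I^{[q]}$ (a single ordinary factor of $\m$, not a Frobenius power of it, and for $q$ large relative to $p$ and $c$), and then extracts $I^{(cn)} \subseteq \m I^n$ from this weaker statement via the colon ideal $(\m^{[q]}:\m^n)$. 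So the ``triviality of tight/Frobenius closure'' shortcut you propose cannot be the mechanism: the input it needs does not hold.

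You correctly identify that the local pigeonhole computation at minimal primes is blind to the $\m$-primary discrepancy between $I^n$ and $\m I^n$, but the remedy you gesture at (socles of $\H^c_\m(R/(\m I^n)^{[q]})$ and a ``Frobenius-compatible duality argument'') is not an argument; it is a hope, and as shown above a hope for a false statement. The actual Gorenstein input in the paper is of a different flavor: one writes $(I^{[p]}:I) = I^{[p]} + (\Delta)$ with $(I^{[p]}:\Delta) = I$ via linkage, and the decisive step is Proposition~\ref{critical-prop}, which compares the minimal free resolutions of $R/I$ and $R/I^{[p]}$ to show $\Delta\cdot I \subseteq \m I^{[p]}$ (using $c\geq 2$ to force minimality of the chain map). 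This is combined with the Takagi--Yoshida bound $I^{(cqp-q+1)} \subseteq \m I^{(cqp-q-c+1)}$ to land the single copy of $\m$.

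Finally, your equicharacteristic-zero reduction is both unnecessary and incorrect as stated. The containment $I^{(cn)} \subseteq \m I^n$ is, for each fixed $n$, a statement involving finitely many data, so it descends to a finitely generated $\ZZ$-algebra model and reduces modulo almost every $p$ directly --- no hypothesis on the symbolic Rees algebra is needed, and indeed the paper's Theorem~\ref{theorem Gorenstein char 0} makes no such hypothesis. Moreover, generation of $\bigoplus_n I^{(n)}t^n$ in degrees $\leq N$ does not obviously reduce $I^{(cn)} \subseteq \m I^n$ to the cases $n \leq N$: the decomposition $I^{(cn)} = \sum I^{(m_1)}\cdots I^{(m_k)}$ has $\sum m_i = cn$ with each $m_i \leq N$, but the $m_i$ need not be multiples of $c$, so the inductive hypothesis does not apply. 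The noetherianity hypothesis is genuinely needed in the paper only for the expected-resurgence corollary, not for this containment.
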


While this might appear similar to \cite[Theorem 3.1]{TakagiYoshida}, there is a crucial difference between the two, since \cite[Theorem 3.1]{TakagiYoshida} only guarantees $I^{(ct-c+1)} \subseteq \m I^{t-1}$, and that one power difference is what will allow us to apply \cite[Theorem 3.3]{GHM} and conclude that $I$ satisfies expected resurgence. Our methods are also necessarily different from \cite[Theorem 3.1]{TakagiYoshida}, since that one extra power of $I$ is a priori difficult to obtain and directly uses the assumption that $R/I$ is Gorenstein.

In Section \ref{section prime char}, we establish the main result in the case of prime characteristic. In Section \ref{section expected resurgence}, we extend the methods of \cite{GHM} to a more general setting that will allow for our equicharacteristic $0$ results. Finally, we study the equicharacteristic $0$ case in Section \ref{section char 0}.


\section{Prime characteristic}\label{section prime char}

\begin{dis}\label{disc} Let $I$ be a homogeneous ideal of height $c \geq 2$ in a standard graded regular ring $R$ over a field $k$ of prime characteristic $p$ and with homogeneous maximal ideal $\m$,
or an ideal in a regular local ring $R$ of characteristic $p$ with maximal ideal $\m$. Assume that $R/I$ is Gorenstein.  We consider the minimal free resolution of $R/I$ over $R$:

$$\xymatrix{
0 \ar[r] & R \ar[r]^{\phi_c} & R^{\beta_{c-1}} \ar[r]^{\phi_{c-1}} & R^{\beta_{c-2}} \ar[r] & \cdots \ar[r] & R^{\beta_1} \ar[r]^{\phi_1} & R \ar[r] & R/I \ar[r] & 0.
}$$

Since $R$ is regular of prime characteristic $p$, \cite[Theorem 1.7]{P-S} shows that the complex of free modules

$$\xymatrix{
0 \ar[r] & R\ar[r]^{\phi^{[p]}_c}& R^{\beta_{c-1}}\ar[r]^{\phi^{[p]}_{c-1}} &R^{\beta_{c-2}}\ar[r]\ & \cdots \ar[r] & R^{\beta_1} \ar[r]^{\phi^{[p]}_1}  & R\ar[r] &R/I^{[p]}\ar[r] & 0
}$$
	
\noindent is also exact and gives a minimal free resolution of $R/I^{[p]}$.  After choosing bases of the free modules, the maps in this resolution are simply the entrywise $pth$ powers
of the entries of the corresponding matrices in the resolution of $R/I$.  The exactness of this resolution proves that $R/I^{[p]}$ is also Gorenstein.

The natural quotient map $R/I^{[p]} \longrightarrow R/I$ induces a map between the resolutions of $R/I$ and $R/I^{[p]}$, as follows:
\begin{align*}
	\xymatrix{
	0 \ar[r] & R \ar[r]^{\phi_c} & R^{\beta_{c-1}} \ar[r]^{\phi_{c-1}} & R^{\beta_{c-2}} \ar[r] & \cdots \ar[r] & R^{\beta_1} \ar[r]^{\phi_1} & R \ar[r] & R/I \ar[r] & 0	\\
	0 \ar[r] & R \ar[r]_-{\phi^{[p]}_c}\ar[u]^{\alpha_c} & R^{\beta_{c-1}}\ar[r]_-{\phi^{[p]}_{c-1}}\ar[u]_-{\alpha_{c-1}} & R^{\beta_{c-2}} \ar[r]\ar[u]_-{\alpha_{c-2}} & \cdots \ar[r] & R^{\beta_1} \ar[r]_-{\phi^{[p]}_1} \ar[u]^{\alpha_1} & R\ar[r] \ar@{=}[u] & R/I^{[p]} \ar[r] \ar[u] & 0	
}
\end{align*}
Notice that $\alpha_c=\mu_{\Delta}$, the map defined by multiplication by some element $\Delta$, which is unique up to homotopy. Since the entries of $\phi_c$ and $\phi_c^{[p]}$ for a minimal generating set for $I$ and $I^{[p]}$, respectively, it follows that $\Delta$ is unique modulo $I^{[p]}$. By standard linkage theory, $\Delta$ is a generator for the colon ideal $(I^{[p]}:I)$ modulo $I^{[p]}$. Duality shows that
$(I^{[p]}:\Delta ) = I$.
See \cite[Theorem 1.2]{KustinMillerLinkage} for details.
\end{dis}

A critical observation that we need for our main results is the following proposition:

\begin{proposition}\label{critical-prop} 
Let the notation be as in Discussion \ref{disc}.  Then $\Delta\cdot I\subseteq \m\cdot I^{[p]}$.
\end{proposition}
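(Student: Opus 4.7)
My plan is to exploit the topmost commuting square of the chain map diagram in Discussion~\ref{disc}. Because $R/I$ is Gorenstein of codimension $c$, its minimal free resolution $F_\bullet$ is self-dual, and one may arrange that $\phi_c = \phi_1^T$; the entries of $\phi_c$ are then exactly the minimal generators $g_1,\dots,g_n$ of $I$, and correspondingly the entries of $\phi_c^{[p]}$ are $g_1^p,\dots,g_n^p$. The relation $\phi_c \circ \mu_\Delta = \alpha_{c-1}\circ \phi_c^{[p]}$ gives, in coordinates,
\[
\Delta\, g_i \;=\; \sum_{j} (\alpha_{c-1})_{ij}\, g_j^{\,p} \qquad (i = 1,\dots,n).
\]
Therefore it suffices to exhibit a choice of the comparison map $\alpha_\bullet$ in which the entries of $\alpha_{c-1}$ all lie in $\mathfrak{m}$: indeed, each $\Delta g_i$ would then be a sum of terms in $\mathfrak{m}\cdot I^{[p]}$, giving $\Delta\cdot I\subseteq \mathfrak{m}\, I^{[p]}$.

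To produce such a lift, I would construct $\alpha_\bullet$ inductively on $i$, maintaining the strengthened bookkeeping that $\alpha_i$ has entries in $\mathfrak{m}^{i(p-1)}$. The base case is direct: $\alpha_0 = \mathrm{id}$, and the assignment $\alpha_1(e_j) := g_j^{p-1} e_j$ provides a valid lift with entries in $\mathfrak{m}^{p-1}$, since $\phi_1(g_j^{p-1} e_j) = g_j^p = \phi_1^{[p]}(e_j)$. For the inductive step, $\alpha_i\circ \phi_{i+1}^{[p]}$ has entries in $\mathfrak{m}^{i(p-1)+p} = \mathfrak{m}^{(i+1)(p-1)+1}$, and by the chain-map compatibility it takes values in $\operatorname{Im}(\phi_{i+1})$. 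Lifting through $\phi_{i+1}$ -- whose own entries lie in $\mathfrak{m}$ by minimality of the resolution -- yields $\alpha_{i+1}$ with entries in $\mathfrak{m}^{(i+1)(p-1)}$. At $i = c-1$, the hypotheses $c \geq 2$ and $p \geq 2$ give $(c-1)(p-1)\geq 1$, so $\alpha_{c-1}$ has entries in $\mathfrak{m}$, closing the argument.

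The main obstacle in this plan is justifying the inductive lifting step in the local setting: that an element of $\operatorname{Im}(\phi_{i+1})$ lying in $\mathfrak{m}^N F_i$ admits a preimage under $\phi_{i+1}$ in $\mathfrak{m}^{N-1} F_{i+1}$. In the standard-graded case this is immediate from degree considerations, since the entries of $\phi_{i+1}$ have strictly positive degree. In the local case it requires a more delicate argument combining minimality of $F_\bullet$ with Nakayama or Artin-Rees, or a reduction to the graded case via the associated graded resolution after completion. The slack $(c-1)(p-1)\geq 1$ afforded by the hypothesis $c\geq 2$ is what makes it possible to absorb the loss inherent in such a lifting argument.
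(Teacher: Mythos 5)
Your plan and the paper's proof share the same starting point (the square $\phi_c\circ\mu_\Delta = \alpha_{c-1}\circ\phi_c^{[p]}$, reducing the claim to $I_1(\alpha_{c-1})\subseteq\mathfrak m$), but from there they diverge, and your route has a genuine gap at exactly the point you flag.

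The inductive lifting step is not a technicality that can be repaired by Artin--Rees or Nakayama: the statement ``if $x\in\operatorname{Im}(\phi_{i+1})\cap\mathfrak m^N F_i$ then some preimage of $x$ lies in $\mathfrak m^{N-1}F_{i+1}$'' is false in general. Artin--Rees only gives $\operatorname{Im}(\phi_{i+1})\cap\mathfrak m^N F_i\subseteq\phi_{i+1}(\mathfrak m^{N-k}F_{i+1})$ for some $k$ depending on the map, and that $k$ need not be $1$. The claim is not ``immediate from degree considerations'' in the graded case either, because the twists on $F_{i+1}$ can be strictly larger than those on $F_i$ by more than one. Concretely, a homogeneous chain map $\alpha_{i+1}:F_{i+1}^{[p]}\to F_{i+1}$ has its $(k,l)$-entry of degree $p\,a_{i+1,l}-a_{i+1,k}$ where $F_{i+1}=\bigoplus R(-a_{i+1,k})$; if the twists on $F_{i+1}$ are spread out, some of these degrees are positive but strictly less than $(i+1)(p-1)$, so the proposed invariant ``entries of $\alpha_i$ lie in $\mathfrak m^{i(p-1)}$'' cannot hold for an arbitrary choice of chain map, and you give no argument that a homotopy can be used to kill the offending entries. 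The base case $\alpha_1=\operatorname{diag}(g_j^{p-1})$ is a valid lift, but the induction does not close.

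The paper avoids this altogether. It takes \emph{any} comparison map $\alpha_\bullet$ and argues by contradiction that $I_1(\alpha_{c-1})\subseteq\mathfrak m$: if not, a change of basis puts $\alpha_{c-1}$ in the form $\mathrm{id}\oplus\alpha'$ with $\alpha_{c-1}(e_1)=e_1$; then the commuting square at position $c-1$ gives $\phi_{c-1}(e_1)=\alpha_{c-2}\circ\phi_{c-1}^{[p]}(e_1)$, expressing the entries $v_1,\dots,v_{\beta_{c-2}}$ of $\phi_{c-1}(e_1)$ as elements of the bracket power $(v_1,\dots,v_{\beta_{c-2}})^{[p]}$. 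Since minimality (using $c\geq 2$) forces each $v_i\in\mathfrak m$, this would give a nonzero ideal contained in $\mathfrak m$ times itself, which is impossible. This argument is local/graded-agnostic, needs no control on the $\mathfrak m$-adic order of the entries of the $\alpha_i$, and does not require choosing a particularly nice lift. If you want to salvage your inductive approach, you would at minimum need to prove a homotopy-correction lemma that repairs the lifting step, and as written there is no argument for it; the paper's short contradiction argument is the cleaner path.
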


\begin{proof}  From the commutative diagram in Discussion \ref{disc}, we see that $\phi_c\circ \alpha_c=\alpha_{c-1}\circ \phi_c^{[p]}$. Thus $I(\Delta)\subseteq I^{[p]} \cdot I_1(\alpha_{c-1})$. Thus it is enough to show $I_1(\alpha_{c-1})\subseteq \m$. Suppose $I_1(\alpha_{c-1})=R$. Possibly after a suitable change of basis, we may assume that $\alpha_{c-1}=\id\oplus\alpha'$, and if $e_1$ denotes the first standard basis element of $R^{\beta_{c-1}}$, $\alpha_{c-1}(e_1) = e_1$. Then $\phi_{c-1}\circ\alpha_{c-1}(e_1)=\phi_{c-1}(e_1)$. If $\phi_{c-1}(e_1) = (v_1, \ldots, v_{\beta{c-2}})$, notice that all $v_i \in \m$, since $I_1(\phi_{c-1}) \subseteq \m$ (note that $c\geq2$). On the other hand, from the commutative diagram above we get
$$\phi_{c-1}\circ \alpha_{c-1}=\alpha_{c-2}\circ\phi_{c-1}^{[p]} \textrm{ so } \phi_{c-1}(e_1)=\alpha_{c-2} \circ \phi_{c-1}^{[p]}(e_1).$$
But then $v_i \in (v_1, \ldots, v_{\beta_{c-2}})^{[q]}$, which is a contradiction.
\end{proof}

\begin{theorem}\label{charpGorenstein} Let $I$ be a homogeneous ideal of height $c \geq 2$ in a standard graded regular ring $R$ containing a field $k$ of characteristic $p$ and with homogeneous maximal ideal $\m$,
or an ideal in a regular local ring $R$ of characteristic $p$ with maximal ideal $\m$. Assume that $R/I$ is Gorenstein. Then $I^{(cq-c+1)}\subseteq\m I^{[q]}$ for every  $q=p^e > pc$. Moreover, for every $n\geq 1$, $I^{(cn)} \subseteq \m I^n$.
\end{theorem}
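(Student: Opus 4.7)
The plan is to iterate Proposition~\ref{critical-prop} and combine the resulting element with the classical Hochster--Huneke--ELS containment. Since $R/I^{[p^i]}$ is Gorenstein for all $i \geq 0$ (by the remarks preceding the Proposition), applying Proposition~\ref{critical-prop} at each level $I, I^{[p]}, \ldots, I^{[q/p]}$ yields $\Delta^{[p^i]} \cdot I^{[p^i]} \subseteq \m I^{[p^{i+1}]}$. Multiplying these for $i = 0, \ldots, e-1$ produces $\Delta^{(q)} := \prod_{i=0}^{e-1} \Delta^{[p^i]}$ satisfying $\Delta^{(q)} \cdot I \subseteq \m^e I^{[q]} \subseteq \m I^{[q]}$. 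Iterated Gorenstein linkage then gives $(I^{[q]} : \Delta^{(q)}) = I$ and $I^{[q]} : I = I^{[q]} + (\Delta^{(q)})$.

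The next move is to invoke the classical containment $I^{(c(q-1)+1)} \subseteq I^{[q]}$, a local pigeonhole at each minimal prime $P$ where $IR_P = PR_P$ is a parameter ideal in $c$ variables of the regular local ring $R_P$. For $f \in I^{(cq-c+1)}$, this places $f \in I^{[q]}$; moreover $f \cdot I \subseteq I^{(cq-c+2)} \subseteq I^{[q]}$, so $f \in I^{[q]} + (\Delta^{(q)})$. Write $f = a + b\Delta^{(q)}$ with $a \in I^{[q]}$; then $b\Delta^{(q)} = f - a \in I^{[q]}$ forces $b \in (I^{[q]} : \Delta^{(q)}) = I$, and hence $b\Delta^{(q)} \in I \Delta^{(q)} \subseteq \m I^{[q]}$ by the iterated Proposition.

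The main obstacle is the remaining task of showing $a \in \m I^{[q]}$. Since $a \equiv f \pmod{\m I^{[q]}}$ by construction, this is equivalent to the original goal, and a pure linkage decomposition is circular at this point. The plan to overcome this is to induct on $e$: the hypothesis $q > pc$ ensures $q/p > c$, keeping us within the valid range for applying the statement at the lower Frobenius level $q/p$. The accumulated factor of $\m^e$ from the iterated Proposition is what ultimately bridges the gap between the classical ELS--HH containment and the refined $\m$-containment we seek, and the recursion depth matches precisely the condition $q = p^e > pc$. The Gorenstein hypothesis is used most delicately here, both through the Peskine--Szpiro Frobenius compatibility of the resolution and through the symmetry of iterated linkage.

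For the ``moreover'' statement $I^{(cn)} \subseteq \m I^n$, the case $n = p^e > pc$ is immediate from the first part via $I^{(cn)} \subseteq I^{(cn-c+1)} \subseteq \m I^{[n]} \subseteq \m I^n$. Extending to arbitrary $n \geq 1$ uses the multiplicativity $I^{(cn)} \cdot I^{(c(q-n))} \subseteq I^{(cq)} \subseteq \m I^q$ for $q \gg n$ a power of $p$, combined with the triviality of tight closure in the regular ring $R$ (so that $(\m I^n)^* = \m I^n$), extracting the desired containment by a Frobenius asymptotic argument.
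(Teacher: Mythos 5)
Your first-part argument reaches $f = a + b\Delta^{(q)}$ with $a \in I^{[q]}$ and $b \in I$, and you correctly flag that nothing controls $a$: since $b\Delta^{(q)} \in \m I^{[q]}$, the decomposition only re-proves $f \in I^{[q]}$, not $f \in \m I^{[q]}$. The fix you gesture at --- inducting on $e$ --- does not close the gap. The hypothesis $q > pc$ gives $q/p > c$, not $q/p > pc$, so the inductive hypothesis is not available one Frobenius level down; and even if it were, knowing $I^{(c(q/p)-c+1)} \subseteq \m I^{[q/p]}$ does not feed into the decomposition $f = a + b\Delta^{(q)}$, because the problematic term $a$ is an arbitrary element of $I^{[q]}$ with no a priori relation to lower symbolic powers. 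The paper closes precisely this gap by bringing in the Takagi--Yoshida containment $I^{(cqp-q+1)} \subseteq \m\, I^{(cqp-q-c+1)}$: the $\m$-factor is injected at the level of symbolic powers \emph{before} one ever touches linkage. The linkage decomposition $(I^{[p]}:I) = I^{[p]} + (\Delta)$ is then applied not to $f$ but to the cofactor $I^{(cqp-q-c+1)} \subseteq (I^{[qp]}:I^{[q]}) = (I^{[p]}:I)^{[q]}$, and Proposition~\ref{critical-prop} handles only the $\Delta$-part after intersecting with $I^{[qp]}$ --- there is no leftover $a$-term to control, because the whole expression already carries the $\m$. Your iterated identity $(I^{[q]}:I) = I^{[q]} + (\Delta^{(q)})$ with $\Delta^{(q)} = \Delta^{(q-1)/(p-1)}$ is correct and could be used, but by itself it cannot produce the $\m$-factor.

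For the ``moreover'' clause, the reduction from arbitrary $n$ to Frobenius powers is also not supplied: the multiplicativity $I^{(cn)} I^{(c(q-n))} \subseteq I^{(cq)}$ does not by itself let you divide through, and appealing to triviality of tight closure would require a single test element $c_0$ with $c_0 f^{q} \in (\m I^n)^{[q]}$ for all $q \gg 0$, which is not what the first part delivers. The paper instead argues by contradiction with a colon trick: assuming $f \in I^{(cn)}\setminus \m I^n$, it uses $I^{(cnq)} \subseteq (\m I^{[q]})^n$ (from the first part and \cite[Theorem 1.1(a)]{comparison}) to show every $z \in (\m^{[q]} : \m^n)$ lies in $(\m I^n : f)^{[q]} \subseteq \m^{[q]}$, forcing $(\m^{[q]}:\m^n) = \m^{[q]}$, impossible for $q > n$. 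This is a concrete finite-$q$ argument, not an asymptotic one, and is what makes the reduction work.
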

\begin{proof} We show the first statement. Equivalently, we will show that $I^{(cqp-c+1)}\subseteq \m I^{[qp]}$ for all  $q = p^e > c$.  In this case, 
	$$I^{(cqp-c+1)}\subseteq I^{(cqp-q+1)},$$
	and by \cite[Theorem 3.1]{TakagiYoshida} using $n=1, k=cqp-q-c$, we have 
	$$I^{(cqp-q+1)}\subseteq \m I^{(cqp-q-c+1)}.$$ 
	On the other hand, 
	$$I^{(cqp-q-c+1)}I^{[q]}\subseteq I^{(cqp-c+1)}\subseteq I^{[qp]}.$$  
	The second containment is well-known; see for example \cite[Lemma 2.6]{GrifoHuneke}. Thus we have
	\begin{align*}
	I^{(cqp-c+1)}\subseteq \m I^{(cqp-q-c+1)}\subseteq \m (I^{[qp]}:I^{[q]})= \m (I^{[p]}:I)^{[q]}.
	\end{align*}
	
	Since $R/I$ is Gorenstein, Discussion \ref{disc} shows that $\left( I^{[p]} : I \right) = I^{[p]}+(\Delta)$ for some $\Delta\in R$. Since $I^{(cqp-c+1)} \subseteq I^{[qp]}$, we also have
	\begin{align*}
	I^{(cqp-c+1)}\subseteq \m (I^{[p]}:I)^{[q]}\cap I^{[qp]}\subseteq \m(I^{[qp]}+(\Delta^q))\cap I^{[qp]}\subseteq \m I^{[qp]}+\m(\Delta^q)\cap I^{[qp]}.
	\end{align*}
	Thus it is enough to show that 
\begin{align}\label{eq1maintheorem in char p}
	(\Delta^q)\cap I^{[qp]}\subseteq \m I^{[qp]}.
\end{align}
	If $r\Delta^q\in  I^{[qp]}$, then 
	\begin{align*}
	r \in I^{[qp]}: \Delta^q  = (I^{[p]}:\Delta)^{[q]} = I^{[q]}.
	\end{align*}

The last equality follows as in Discussion \ref{disc}.
	This shows that $(\Delta^{q})\cap I^{[qp]}\subseteq (I \Delta)^{[q]}$. By Proposition \ref{critical-prop},  $I(\Delta)\subseteq \m I^{[p]}$. The first statement follows.

To prove the second statement, assume not, and choose an element $f\in I^{(cn)}$, with $f\notin \m I^n$.
By the first part of the theorem, there exists $q > n$ such that $I^{(cq-c+1)} \subseteq \m I^{[q]}$. Observe that
$$I^{(cnq)} = I^{(cqn)} = I^{(cn + (cq-c)n))} \subseteq \left( I^{(cq-c+1)} \right)^n,$$
where the last containment follows by \cite[Theorem 1.1 (a)]{comparison}. Since $I^{(cq-c+1)} \subseteq \m I^{[q]}$,
$$I^{(cnq)} \subseteq \left( I^{(cq-c+1)} \right)^n \subseteq \left( \m I^{[q]} \right)^n = \m^n \left( I^{[q]} \right)^n.$$

Take any $z \in \left( \m^{[q]} : \m^n \right).$ Then
$$f^q \in \left( I^{(cn)} \right)^{q} \subseteq I^{(cnq)} \subseteq \m^n \left( I^{[q]} \right)^n,$$
so
$$z f^q \in \left( \m I^n \right)^{[q]}.$$
Then
$$z \in \left( \left( \m I^n \right)^{[q]} : f^q \right) = \left( \m I^n : f \right)^{[q]} \subseteq \m^{[q]},$$
since we assumed that $f \notin \m I^n$. But this implies that
$$\left( \m^{[q]} : \m^n \right) = \m^{[q]},$$
which is impossible by choice $q > n$. 
\end{proof}

\begin{remark}
	In the proof above, we used \cite[Theorem 3.1]{TakagiYoshida}, which says that if $(R, \m)$ is an excellent regular local ring of characteristic $p$ and $I$ is a radical ideal of big height $I$ in $R$, then
	$$I^{((h+k)n + 1)} \subseteq \m \left( I^{(k+1)} \right)^n$$
	for all $n \geqslant 1$ and $k \geqslant 0$. We note, however, that this result also holds in the case where $R$ is instead a standard graded regular ring containing a field $k$ of characteristic $p$ and with homogeneous maximal ideal $\m$. Indeed, the graded case follows directly from the local case, once we notice that it is sufficient to check that 
	$$\left( I^{((h+k)n + 1)} \right)_P \subseteq \left( \m \left( I^{(k+1)} \right)^n \right)_P$$
	at each prime ideal $P$ of $R$; when $P = \m$, the statement is precisely \cite[Theorem 3.1]{TakagiYoshida}, while for $P \neq \m$ the statement becomes 
	$$I^{((h+k)n + 1)} \subseteq \left( I^{(k+1)} \right)^n,$$
	which is \cite[Theorem 1.1 (a)]{comparison}.
\end{remark}

We recall \cite[Theorem 3.3]{GHM}:

\begin{theorem}[Theorem 3.3 in \cite{GHM}]\label{main resurgence}
Let $I$ be a radical ideal of big height $c \geqslant 2$ in a regular local ring $(R,\m)$ containing a field, or a quasi-homogeneous radical ideal of big height $c\geqslant 2$ in a polynomial ring over a field with
	irrelevant maximal ideal $\m$.
	 If 
	$$
	I^{(ct-c+1)} \subseteq \m I^t \text{ for some fixed }t,
	$$
	and if $I_p$ has the property that $I_P^{(n)} = I_P^n$ for all $P \ne \m$ and for all $n$, then
	$\rho(I) < c$.
\end{theorem}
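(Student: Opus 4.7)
To establish $\rho(I) < c$, it suffices to produce a uniform $\epsilon > 0$ and a threshold $s_0$ such that $I^{(m)} \subseteq I^s$ for every $s \geq s_0$ and every $m \geq (c-\epsilon)s$. The essential inputs are the single-power hypothesis $I^{(ct-c+1)} \subseteq \m I^t$ and the local hypothesis $I_P^{(n)} = I_P^n$ for every prime $P \neq \m$; the latter forces $I^{(n)} = I^n : \m^\infty$ for all $n \geq 1$, so symbolic powers of $I$ are precisely saturations of ordinary powers at $\m$ and all obstructions to the containment problem are concentrated at $\m$.

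\noindent The natural first move is to iterate the hypothesis by raising to $n$-th ordinary powers, obtaining
\[
I^{n(ct-c+1)} \;\subseteq\; \bigl(I^{(ct-c+1)}\bigr)^n \;\subseteq\; \m^n I^{nt}.
\]
To convert this into an upper bound on a \emph{symbolic} power, one might attempt to saturate both sides with $\m^\infty$. However, a direct saturation absorbs the factor of $\m^n$ and yields only the trivial inclusion $I^{(n(ct-c+1))} \subseteq I^{(nt)}$, so the $\m^n$ must be exploited \emph{before} saturation is completed.

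\noindent The main step, and the main obstacle, is to convert the extra $\m^n$ factor into a genuine improvement on the $I$-filtration side. For $f \in I^{(n(ct-c+1))}$, the saturation hypothesis gives $\m^K f \subseteq \m^n I^{nt}$ for some $K$; the plan is to apply an Artin--Rees-type argument on the filtration $\{I^{nt+k}\}_{k \geq 0}$, together with the fact that $R/I^{nt}$ has no embedded primes away from $\m$, to force $f \in I^{nt + k}$ for some $k$ growing linearly in $n$. Combining with the ELS-type containments $I^{(cr)} \subseteq I^r$ to handle remainders when writing an arbitrary large $s$ as $s = nt + r$ with $0 \leq r < t$, one obtains a uniform containment of the form $I^{(n(ct-c+1) + cr)} \subseteq I^{nt + r + \delta n}$ for some $\delta > 0$ independent of $n$ and $r$. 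Passing to the limit $n \to \infty$ then yields
\[
\rho(I) \;\leq\; \frac{ct - c + 1}{t + \delta} \;<\; c,
\]
as desired. The critical technical hurdle is securing the Artin--Rees constant uniformly in $n$, and it is here that the assumption that $\m$ is the only possible embedded associated prime of $R/I^n$ enters essentially: it allows one to control the $\m$-torsion of $R/I^{nt}$ via the Artinian submodule $I^{(nt)}/I^{nt}$ and to translate the $\m^n$ gain into actual gains in the $I$-adic filtration.
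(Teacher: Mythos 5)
There is a genuine gap at the central step, which your plan flags but does not close. You correctly recognize that the $\m^n$ factor in $\m^n I^{tn}$ must be converted into an improvement in the $I$-adic filtration before it is absorbed by saturation, and you correctly anticipate that the resulting uniform containment $I^{(ctn)}\subseteq I^{tn+\delta n}$ gives $\rho(I)\le ct/(t+\delta)<c$. But the conversion is precisely the hard part, and ``an Artin--Rees-type argument on the filtration $\{I^{nt+k}\}$'' does not deliver it. For one thing, the naive implication fails: in general $\m^n I^{tn}\not\subseteq I^{tn+1}$ (e.g., $I=(x)$ in $k[x,y,z]$, where $y^n x^{tn}\in\m^n I^{tn}\setminus I^{tn+1}$), so no Artin--Rees constant, however cleverly chosen, turns the membership $\m^K f\subseteq\m^n I^{nt}$ for a \emph{single} element $f$ into $f\in I^{nt+\delta n}$. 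The gain has to come from the fact that $f$ lies in a high \emph{symbolic} power, and this is exploited by taking a large \emph{ordinary} power of $f$: one writes $f^{b+1}\in (\m^n I^{tn})^b\cdot I^{(ctn)}=\m^{bn}I^{btn}I^{(ctn)}$, chooses $b$ so that $bn\ge lctn$ where $l$ is the constant in Swanson's uniform theorem ($\m^{ls}I^{(s)}\subseteq I^s$ for all $s$), deduces $f^{b+1}\in I^{(bt+ct)n}$, and then extracts a $(b+1)$-st root via the finitely many Rees valuations of $I$ together with Brian\c{c}on--Skoda to land $f$ in $I^{rn}$ for some $r$ strictly between $t$ and $(bt+ct)/(b+1)$; the inequality $(bt+ct)/(b+1)>t$ is exactly $c>1$. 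These tools---uniform linear equivalence of the $I$-symbolic and $I$-adic topologies (Swanson) and Rees valuations/uniform integral closure---are not Artin--Rees arguments, and your proposal never takes powers of $f$, which is where the linear gain $\delta n$ actually comes from. This is also how \Cref{lemma cn in mI implies expected resurgence} is proved in the present paper.

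A secondary, smaller point: the first display is stated for the ordinary power $I^{n(ct-c+1)}$, which is trivially contained in $(I^{(ct-c+1)})^n$ and gives no leverage. The relevant input is $I^{(ctn)}\subseteq (I^{(ct-c+1)})^n\subseteq \m^n I^{tn}$, which follows from Hochster--Huneke's comparison theorem applied with parameters $k=ct-c$, $n$; you do cite ELS-type containments later for the remainder terms, but this bootstrapping containment is the one that actually launches the argument.
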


As a consequence, we can now prove the following:

\begin{corollary}
Let $I$ be a radical  ideal of height $c \geq 2$ in a regular local ring $(R,\m)$ of characteristic $p$. Suppose that $R/I$ is Gorenstein. Further assume that for all primes $P$ not equal to
the maximal ideal $\m$, $I^n_P = I^{(n)}_P$. Then the resurgence of $I$ is expected, i.e., $\rho(I) < c$.  In particular, $I$ satisfies the stable Harbourne conjecture.
\end{corollary}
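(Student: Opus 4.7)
The proof plan is to combine the two main ingredients already developed in the excerpt: Theorem \ref{charpGorenstein}, which under the Gorenstein hypothesis in characteristic $p$ produces a power $q$ of $p$ for which $I^{(cq-c+1)} \subseteq \m I^{[q]}$, and Theorem \ref{main resurgence}, which converts such a single containment (together with the local triviality of symbolic powers away from $\m$) into the statement $\rho(I) < c$.

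First I would reconcile the hypotheses of the two theorems. Since $R/I$ is Gorenstein, it is in particular Cohen--Macaulay, so $I$ is unmixed and every associated prime of $I$ has height $c$; hence the big height of $I$ coincides with its height $c$, matching the hypothesis of Theorem \ref{main resurgence}. Then I would apply Theorem \ref{charpGorenstein} to pick some $q = p^e > pc$ so that
$$I^{(cq-c+1)} \subseteq \m I^{[q]} \subseteq \m I^{q},$$
using $I^{[q]} \subseteq I^{q}$. Taking $t = q$ gives exactly the containment $I^{(ct-c+1)} \subseteq \m I^{t}$ required as the hypothesis of Theorem \ref{main resurgence}. Combined with the standing assumption that $I_P^{(n)} = I_P^{n}$ for all primes $P \neq \m$ and all $n$, Theorem \ref{main resurgence} yields $\rho(I) < c$, which is expected resurgence.

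Finally, for the stable Harbourne conclusion, I would argue directly from the definition of resurgence: if $m/s > \rho(I)$ then $m/s$ is not in the set over which the supremum is taken, so $I^{(m)} \subseteq I^{s}$. Setting $m = cn - c + 1$ and $s = n$, we have $m/s = c - (c-1)/n$, which exceeds $\rho(I)$ for all $n \gg 0$ since $\rho(I) < c$. Thus $I^{(cn-c+1)} \subseteq I^{n}$ for all $n \gg 0$, which is the stable Harbourne conjecture.

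I do not expect a serious obstacle here: the work has been absorbed into Theorems \ref{charpGorenstein} and \ref{main resurgence}, and the only thing to watch is that the hypotheses line up (in particular that big height equals height under the Gorenstein assumption, and that the characteristic-$p$ containment $I^{(cq-c+1)} \subseteq \m I^{[q]}$ can be weakened to $\m I^{q}$ without loss). The only minor care needed is in the last step, ensuring that the strict inequality $\rho(I) < c$ really does yield the asymptotic containment rather than merely the containment for specific values of $n$.
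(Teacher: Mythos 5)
Your proposal is correct and follows the same route as the paper: apply Theorem \ref{charpGorenstein} to get $I^{(cq-c+1)} \subseteq \m I^{[q]} \subseteq \m I^q$ for a suitable $q=p^e$, then take $t=q$ in Theorem \ref{main resurgence}. The paper states this in two sentences; you have just spelled out the implicit details (Gorenstein $\Rightarrow$ unmixed so height equals big height, $I^{[q]} \subseteq I^q$, and the standard deduction of stable Harbourne from $\rho(I)<c$), all of which are accurate.
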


\begin{proof}  
Using Theorem \ref{charpGorenstein}, we obtain that $I^{(cq-c+1)}\subseteq\m I^{[q]}$ for every  $q=p^e > pc$.  Then simply put $t= q$ for large enough $q$ in the statement of Theorem \ref{main resurgence}.
\end{proof}

\begin{remark}  
The condition that $I^n_P = I^{(n)}_P$ for all primes $P$ not equal to the maximal ideal comes up often in the statements of our theorems. It is not an easy question in general to decide whether or not the symbolic powers of an ideal agree with the usual powers, although this equality holds whenever $I$ is generated by a regular sequence. In particular, if $I$ is self-radical in a regular ring $R$ and the dimension of $R/I$ is one, then $I_P$ is generated by a regular sequence for all primes $P$ not equal to the maximal ideal. More generally if $R/I$ has an isolated singularity, or in the graded case if the associated projective variety is smooth, then $I^n_P = I^{(n)}_P$ for all $n$ and for all primes $P$ with $P \ne \m$. This condition is also equivalent to $I^{(n)} = (I^n : \m^\infty)$ for all $n \geqslant 1$.
\end{remark}

\begin{remark}
	One of the important evidences in the proof of \Cref{charpGorenstein} is the containment \eqref{eq1maintheorem in char p}. When $R$ is graded, then the containment \eqref{eq1maintheorem in char p} can also be verified by comparing the degree of the generators on both sides of the relation. Consequently, it would be enough to check if $\deg \Delta$ is more than the maximal degree of a minimal generating set of the homogeneous ideal $I$. When translated into the language of Betti numbers, the previous statement is true due the existence of \textit{extremal Betti numbers} of $I$. Roughly speaking, an extremal Betti number $\beta_{i,j}$ is the non-zero top left corner in a block of zeroes in the betti diagram for $I$ (we refer to \cite{BaChaPop} for a formal definition). The existence of extremal Betti numbers has been proved in \cite[Theorem 1.2]{BaChaPop}.
\end{remark}


\section{Expected Resurgence}\label{section expected resurgence}

In this section, we extend some the sufficient conditions for expected resurgence from \cite{GHM} in such a way that will allow us to prove expected resurgence for suitable ideals defining Gorenstein rings in equicharacteristic $0$. We also search for other characteristic-free conditions which imply expected resurgence, focusing on the condition that the symbolic Rees algebra of $I$ is noetherian. Considering such a condition is particularly important since even a strong result in characteristic $p$ will not necessarily give any conclusion in equicharacteristic $0$, despite of the usual technique of reduction to characteristic $p$. This is due to the fact that resurgence is an asymptotic invariant, and one can only generally conclude something for all large values. In characteristic $p$, these large values often depend on the characteristic itself, for example as in the statement of Theorem \ref{charpGorenstein}. As the characteristic grows, so too do these bounds, making it impossible to apply standard reduction techniques. 

The {\it symbolic Rees algebra} of an ideal $I$ in a ring $R$ is the graded algebra
$$\mathcal{R}_s(I) = \bigoplus_{n \geqslant 0} I^{(n)} t^n \subseteq R[t].$$
In general, the symbolic Rees algebra can fail to be noetherian, even if $I$ is a prime ideal of height $d-1$ in a regular ring of dimension $d$, with the first such example with non-noetherian symbolic Rees algebra found by Roberts in \cite{RobertsExample}.  Even for the special case of the defining ideal of a space monomial curve $(t^a, t^b, t^c)$, the symbolic Rees algebras are not noetherian \cite{NonnoetherianSymb} for certain choices of $a$, $b$, and $c$. The condition that the symbolic Rees algebra is noetherian does have nice consequences, such as the fact that the resurgence is rational \cite{DiPasqualeDrabkin}.

While determining whether or not the symbolic Rees algebra of a given radical ideal is noetherian is a difficult question in general, this does indeed happen for various interesting classes of ideals. For example, the symbolic Rees algebra of a monomial ideal is always noetherian \cite[Proposition 1]{LyuArithMon}. Morales gave a criterion for when primes defining curves have noetherian symbolic Rees algebras \cite{Morales}; see also \cite{HunekeHilbertSymb}. In \cite{Cutkosky}, Cutkosky showed that the symbolic Rees algebra of the defining ideal of $k[t^a,t^b,t^c]$ is noetherian as long as $(a+b+c)^2 > abc$.

\vspace{2em}

We first need a result similar to \cite[Lemma 2.5]{GHM}, but stronger:

\begin{lemma}\label{lemma cn in mI implies expected resurgence}
Let $(R, \m)$ be a regular local ring of dimension $d$, or $R$ a standard graded ring containing a field $k$ with homogeneous maximal ideal $\m$. Let $I$ be an ideal of big height $c$ such that $I^{(n)} = \left( I^n : \m^\infty \right)$ for all $n \geqslant 1$. If $I^{(cn)} \subseteq \overline{m^{\lfloor \frac{n}{\alpha} \rfloor} I^n}$ for all $n \gg 0$, then $\rho(I)<c$.
\end{lemma}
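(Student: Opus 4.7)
The plan is to first show that the asymptotic resurgence $\rho_a(I)$ is strictly less than $c$, and then to upgrade this conclusion to $\rho(I) < c$ via the Brian\c{c}on--Skoda theorem.

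For the first step, I would use the integral-closure characterization $\rho_a(I) = \sup\{m/s : I^{(m)} \not\subseteq \overline{I^s}\}$ from \cite{AsymptoticResurgenceIntClosure}, combined with the fact that $\overline{I^s}$ is cut out by the (finitely many) Rees valuations of $I$. This reduces $\rho_a(I) < c$ to showing that for each Rees valuation $v$ of $I$ one has $v(I) < c\,\widehat{v}(I)$, where $\widehat{v}(I) := \lim_{m \to \infty} v(I^{(m)})/m$. A Rees valuation $v$ of $I$ is centered at a unique prime of $R$, either $\m$ itself or a strictly smaller prime $P \subsetneq \m$. If the center is $P \neq \m$, then $v(\m) = 0$, and localizing the saturation hypothesis at $P$ gives $I_P^{(n)} = I_P^n$, so $v(I^{(n)}) = n\,v(I)$ and hence $\widehat{v}(I) = v(I)$; thus $v(I)/\widehat{v}(I) = 1 < c$. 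If the center is $\m$, so that $v(\m) > 0$, applying $v$ to $I^{(cn)} \subseteq \overline{\m^{\lfloor n/\alpha \rfloor} I^n}$ and using that $v(\overline{J}) = v(J)$ gives
$$
v(I^{(cn)}) \;\geq\; \lfloor n/\alpha \rfloor\, v(\m) + n\, v(I);
$$
dividing by $cn$ and letting $n \to \infty$ yields $\widehat{v}(I) \geq v(I)/c + v(\m)/(c\alpha)$, whence $v(I)/\widehat{v}(I) \leq c/(1 + v(\m)/(\alpha v(I))) < c$. As there are only finitely many Rees valuations, the maximum over them is achieved and is strictly less than $c$.

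For the upgrade step, fix $\epsilon_0 > 0$ with $\rho_a(I) \leq c - \epsilon_0$, so that $I^{(m)} \subseteq \overline{I^s}$ whenever $m/s > c - \epsilon_0$. The Brian\c{c}on--Skoda theorem in the $d$-dimensional regular ring $R$ gives $\overline{I^s} \subseteq I^{s-d+1}$ for $s \geq d$; substituting $s' = s - d + 1$, we obtain $I^{(m)} \subseteq I^{s'}$ whenever $s' \geq 1$ and $m > (c - \epsilon_0)(s' + d - 1)$. For any fixed $\epsilon_1 \in (0, \epsilon_0)$, a short calculation produces an integer $N$ such that this condition is implied by $m/s' > c - \epsilon_1$ for all $s' \geq N$, and hence $\rho_{s'}(I) \leq c - \epsilon_1$ for all $s' \geq N$. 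For each of the finitely many remaining $s' < N$, the containment $I^{(cs')} \subseteq I^{s'}$ from \cite{ELS, comparison, MaSchwede} immediately forces $\rho_{s'}(I) \leq (cs'-1)/s' \leq c - 1/(N-1)$. Taking the supremum over all $s'$ gives $\rho(I) \leq \max(c - \epsilon_1,\, c - 1/(N-1)) < c$.

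The main subtlety will be the dichotomy between Rees valuations centered at $\m$ and those centered elsewhere: the hypothesis produces slack only at the former, and it is precisely the saturation assumption that controls the latter. The loss of $d-1$ powers of $I$ in the Brian\c{c}on--Skoda step is harmless for large $s'$ but a priori worrying for small $s'$; the ELS/Hochster--Huneke containment handles each small $s'$ individually, and finiteness does the rest.
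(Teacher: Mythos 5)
Your proposal is correct, but it takes a genuinely different route from the paper's proof. You go through the asymptotic resurgence: using the integral-closure characterization $\rho_a(I)=\sup\{m/s : I^{(m)}\not\subseteq\overline{I^s}\}$ from \cite{AsymptoticResurgenceIntClosure} together with the fact that $\overline{I^s}$ is cut out by the finitely many Rees valuations of $I$, you split those valuations according to whether their center on $R$ is $\m$ or a smaller prime. The saturation hypothesis $I^{(n)}=(I^n:\m^\infty)$ forces $\widehat{v}(I)=v(I)$ at the latter, while the hypothesis $I^{(cn)}\subseteq\overline{\m^{\lfloor n/\alpha\rfloor}I^n}$ supplies the uniform slack $v(\m)/(c\alpha)>0$ at the former, giving $\rho_a(I)<c$. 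You then upgrade to $\rho(I)<c$ via Brian\c{c}on--Skoda ($\overline{I^{s'+d-1}}\subseteq I^{s'}$) for all sufficiently large denominators $s'$, and dispose of the finitely many small $s'$ with the containment $I^{(cs')}\subseteq I^{s'}$ of \cite{ELS,comparison,MaSchwede}. The paper, by contrast, never invokes $\rho_a$ or the DFMS characterization at all: it reduces, via \cite[Lemma 2.5]{GHM}, to exhibiting integers $t<r$ with $I^{(ctn)}\subseteq I^{rn}$ for $n\gg 0$, and then runs a direct high-power argument using Swanson's uniform bound $\m^{ln}I^{(n)}\subseteq I^n$, a ``test element'' $v$ whose multiplication moves integral closures of powers of $\m^{\lfloor t/\alpha\rfloor}I^t$ back inside those powers, and a Rees-valuation cancellation step ($vJ\subseteq\overline{I^n}\Rightarrow J\subseteq\overline{I^{n-k}}$), again finishing with Brian\c{c}on--Skoda. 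Both proofs rest on the same underlying machinery (finitely many Rees valuations, Brian\c{c}on--Skoda, uniform linear behavior of symbolic powers), but your route through $\rho_a$ cleanly decouples the asymptotic estimate from the finite-index cleanup and is arguably more conceptual, whereas the paper's route is self-contained and avoids citing the DFMS theorem at the cost of more intricate exponent bookkeeping. One small point to keep in mind when writing this up: the bound $c-1/(N-1)$ for the small $s'$ only makes sense when $N\geq 2$, but when $N\leq 1$ (e.g., $d=1$) the range $1\leq s'<N$ is empty and the step is vacuous, so no harm is done.
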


\begin{proof}
By \cite[Lemma 2.5]{GHM}, it is enough to find positive $t < r$ such that $I^{(tcn)} \subseteq I^{rn}$ for all $n\gg 0$. Since $\overline{I^{rn+d}} \subseteq I^{rn}$, it is enough to show that $I^{(ctn)} \subseteq \overline{I^{rn+d}}$ for all $n\gg 0$. 
	
	Now consider any positive integer $t$, and let $v \in R$ be nonzero such that
	$$v \overline{\left( \m^{\lfloor\frac{t}{\alpha}\rfloor} I^{t} \right)^n} \subseteq \m^{\lfloor\frac{t}{\alpha}\rfloor n} I^{nt}$$
	for all $n$. For why such a $v$ exists, see for example \cite[Corollary 6.8.12]{HunekeSwansonIntegral2006}.	
	Fix integers $l$ and $k$ with the following properties:
	\begin{enumerate}
	\item $\m^{ln} I^{(n)} \subseteq I^n$ for all $n \geqslant 1$.
	\item If $vJ \subseteq \overline{I^n}$ for some ideal $J$, then $J \subseteq \overline{I^{n-k}}$.
	\end{enumerate}
	Such $l$ exists by \cite[Main Theorem]{Swanson97}, given our assumption that $\m$ is the only possible associated prime for $I^n$.  The integer $k$ exists since the integral closures of powers of $I$ are
	determined by the (finitely many) Rees valuations $v_1,...,v_m$ of $I$; see \cite[Theorem 10.2.2]{HunekeSwansonIntegral2006}. If $k$ is chosen so that $v_i(v) \leq v_i(I^k)$ for every such $v_i$, then $vJ \subseteq \overline{I^n}$ implies that
	$v_i(J)\geq v_i(I^{n-k})$ for $1\leq i\leq m$, which then implies that $J \subseteq \overline{I^{n-k}}$.

	Given any integer $b$, and $n \gg 0$,
	\begin{align}\label{thm1eq2}
	(v I^{(ctn)})^{b+1} & \subseteq \left(v \, \overline{\m^{\lfloor\frac{t}{\alpha}\rfloor n} I^{tn}}\right)^b I^{(ctn)} \subseteq \m^{b \lfloor\frac{t}{\alpha}\rfloor n} I^{bnt} I^{(ctn)}.
	\end{align}
	Now we claim that we can chose $b$ (independent of $t$ and $r$) such that
	$$b\left\lfloor\frac{t}{\alpha}\right\rfloor > lct.$$
	To do that, take $b \geqslant 2  l c \alpha$.

	\begin{align*}
	b\left\lfloor\frac{t}{\alpha}\right\rfloor=bu&>2lc\alpha u\\
	&=lc\alpha u+lc\alpha u\\
	&=lc(t-v)+lc\alpha u\\
	&=lct-lcv+lc\alpha u=lct+lc(\alpha u-v)\\
	&>lctn.
	\end{align*}
	
	With our choice of $b$, we have
	$$\m^{b \lfloor\frac{t}{\alpha}\rfloor n} I^{(cnt)} \subseteq \m^{lctn} I^{(cnt)} \subseteq I^{ctn},$$
	and thus
	$$(v I^{(ctn)})^{b+1} \subseteq I^{ctn + btn}.$$
	So far, everything we have shown holds for any positive integer $t$. We will now show that given a fixed $b \geqslant 2 l c \alpha$ and $k$ as above, we can find positive integers $t < r$ such that
	$$ctn+btn > (rn + d + k)(b+1).$$
	With such values, we will have
	$$(v I^{(ctn)})^{b+1} \subseteq \left( I^{rn+d+k} \right)^{b+1},$$
	which shows that
	$$v I^{(ctn)} \subseteq \overline{I^{rn+d+k}}.$$
	As a consequence, we have
	$$I^{(ctn)} \subseteq \overline{I^{rn+d}} \subseteq I^{rn}.$$
	So all that remains to show that given $b \geqslant 2 l c \alpha$, we can chose positive $t<r$ such that
	$$ctn+btn > (rn + d + k)(b+1).$$
	To do that, choose an integer $r \geqslant 3$ such that $r \geq \frac{1+\gamma (d+l)}{1-\gamma}$, where $\gamma=\frac{b+1}{b+c}$. Then
\begin{align*}
r- r \gamma \geq 1+\gamma (d+k) & \textrm{ so } \\
r-1 \geqslant (r + d + k) \gamma 
\end{align*}
Pick any integer $t$ such that 
\begin{align*}
r > t \geq (r + d + k) \gamma.
\end{align*}
For all $n \geq 1$, we have
\begin{align}
t & \geq \left(r + d + k \right) \gamma \geqslant \left(r+\frac{d+k}{n}\right)\gamma = \left(r + \frac{d+k}{n} \right) \left(\frac{b+1}{b+c}\right).
\end{align}
Multiplying both sides by $n$, we get
\begin{align*}
tn & \geqslant \left(rn + d + k \right) \left(\frac{b+1}{b+c}\right),\textrm{ so } \\
tn(b+c) & \geqslant (rn+d+k)(b+1).
\end{align*}

And this concludes our proof that
$$I^{(ctn)} \subseteq I^{rn}.$$
By \cite[Lemma 2.5]{GHM}, this implies that $\rho(I) < c$.
\end{proof}

Our main result in this section assumes the symbolic Rees algebra of $I$ is noetherian. Under mild circumstances such as excellence, this
is equivalent to the statement that there exists $l$ such that $I^{(ln)} = \left( I^{(l)} \right)^n$ for all $n \geqslant 1$. See for example \cite[Lemma 2]{Rees1958ProblemZariski} or \cite[Lemma 5.2]{mythesis}. 

\begin{theorem}
	Let $I$ be a homogeneous ideal of big height $c \geq 2$ in a standard graded regular ring $R$ with homogeneous maximal ideal $\m$, or an ideal in a regular local ring $(R, \m)$. Suppose that:
	\begin{enumerate}[a)]
		\item $I^{(cn)} \subseteq \overline{\m I^n}$ for all $n \gg 0$; and 
		\item the symbolic Rees algebra of $I$ is noetherian. 
	\end{enumerate}
Then there exists an integer $l$ such that $I^{(cn)}\subseteq \overline{\m^{\left\lfloor \frac{n}{l} \right\rfloor} I^{n}}$ for all $n\geq 1$.
\end{theorem}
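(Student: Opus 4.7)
The plan is to combine noetherianity of the symbolic Rees algebra with hypothesis (a): noetherianity decomposes any $I^{(cn)}$ into a product of ``chunks'' whose sizes are uniformly bounded, and hypothesis (a) turns each chunk into a factor of $\m$ times a power of $I$. The uniform bound on the chunks is what forces the product to contain at least $\lfloor n/l \rfloor$ of them.

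Since $\mathcal{R}_s(I) = \bigoplus_{n\geq 0} I^{(n)} t^n$ is noetherian, its Veronese $V := \bigoplus_{n \geq 0} I^{(cn)} t^n$ is a finitely generated $R$-algebra by Artin--Tate. First I would pick algebra generators $h_1, \ldots, h_m$ of $V$ with $h_j \in I^{(c a_j)}$, set $A := \max_j a_j$, and use (a) to pick $N$ with $I^{(cn)} \subseteq \overline{\m I^n}$ for all $n \geq N$. Set $l := 2(N+A)$. For $n < l$ we have $\lfloor n/l \rfloor = 0$ and $I^{(cn)} \subseteq \overline{I^n}$ by Ein--Lazarsfeld--Smith, Hochster--Huneke and Ma--Schwede, so assume $n \geq l$.

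Any element of $I^{(cn)}$ is an $R$-linear combination of monomials $M = h_{j_1}^{e_1} \cdots h_{j_s}^{e_s}$ with $\sum_i e_i a_{j_i} = n$, so it is enough to place each such $M$ into $\overline{\m^{\lfloor n/l \rfloor} I^n}$. View $M$ as a product of $\sum_i e_i$ atoms, each of weight $\leq A$, and partition them greedily: accumulate atoms in the current block until its weight first reaches $N$, then open a new block, and merge any final short block with its predecessor. By construction every block weight $w_t$ lies in $[N, 2N+A-2] \subseteq [N, l]$. Writing $S$ for the number of blocks, $\sum_t w_t = n$ and $w_t \leq l$ together force $S \geq n/l \geq \lfloor n/l \rfloor$.

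Each block $B_t$ contributes a product of atoms lying in $I^{(c w_t)} \subseteq \overline{\m I^{w_t}}$ by hypothesis (a), since $w_t \geq N$. Multiplying across all $S$ blocks,
$$M \in \prod_{t=1}^S \overline{\m I^{w_t}} \subseteq \overline{\m^S I^{\sum_t w_t}} = \overline{\m^S I^n} \subseteq \overline{\m^{\lfloor n/l \rfloor} I^n}.$$
The subtle point, and the only place where noetherianity is doing real work, is the uniform bound $A$ on the weights of algebra generators of $V$: without it there would be no reason for the greedy partition to supply $\sim n/l$ chunks rather than just a fixed number. Once the weight bound is in hand, the partition and the multiplicativity of integral closure make the conclusion routine.
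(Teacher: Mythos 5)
Your proof is correct, but it takes a genuinely different route from the paper's. The paper extracts from noetherianity of $\mathcal{R}_s(I)$ the existence of $l$ with $I^{(ln)} = \bigl(I^{(l)}\bigr)^n$ for all $n$ (Rees's lemma), chooses $l$ also large enough that hypothesis~(a) kicks in at $l$, first proves the claim when $n$ is a multiple of $l$ via $I^{(clk)} = \bigl(I^{(cl)}\bigr)^k \subseteq \bigl(\overline{\m I^l}\bigr)^k \subseteq \overline{\m^k I^{lk}}$, and then bootstraps to arbitrary $n$ by raising $f \in I^{(cn)}$ to the $l$-th power and using that $f^l \in \overline{J^l}$ forces $f \in \overline{J}$. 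You instead extract from noetherianity the finite generation of the Veronese $V=\bigoplus_n I^{(cn)}t^n$, which gives a uniform bound $A$ on the weights of a finite set of generating ``atoms''; you then partition each monomial greedily into at least $\lfloor n/l\rfloor$ blocks of weight lying in $[N, 2N+A-2]\subseteq[N,l]$, apply hypothesis~(a) blockwise (each block lands in $I^{(cw_t)}\subseteq\overline{\m I^{w_t}}$), and multiply, using $\overline{J_1}\cdots\overline{J_s}\subseteq\overline{J_1\cdots J_s}$. Both proofs are sound; the paper's is shorter and leans on the ``power of integral closure'' trick, while yours makes the source of the factor $\m^{\lfloor n/l\rfloor}$ explicitly combinatorial. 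One small remark: your appeal to Artin--Tate requires noting that $\mathcal{R}_s(I)$ is integral over $V$ (since $(xt^n)^c\in V$); alternatively, finite generation of the Veronese of a finitely generated $\mathbb{N}$-graded algebra over a noetherian base is a standard fact one can cite directly.
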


\begin{proof}
Assumption b) implies that there exists $t$ such that $I^{(tn)} = \left( I^{(t)} \right)^n$ for all $n \geqslant 1$; see for example \cite[Lemma 2]{Rees1958ProblemZariski}. Notice that this containment also holds if we replace $t$ by any multiple of $t$, and so we may assume without loss of generality that $l=tk$ is chosen large enough so that 
\begin{itemize}
	\item $I^{(cn)} \subseteq \overline{\m I^n}$ for all $n \geqslant l$, and
	\item $I^{(ln)} = \left( I^{(l)} \right)^n$ for all $n \geqslant 1$.
\end{itemize}
First, we will show our claim when $n$ is a multiple of $l$, say $n = lk$. Indeed,
$$I^{(clk)} = \left( I^{(cl)} \right)^k \subseteq \left( \overline{\m I^{l}} \right)^k \subseteq \overline{\m^k I^{lk}} \subseteq \overline{\m^{\left\lfloor \frac{kl}{l} \right\rfloor} I^{kl}}.$$
Now take $f \in I^{(cn)}$ and $n \geqslant 1$ . Then
$$f^l \in \left( I^{(cn)} \right)^l \subseteq I^{(cnl)} \subseteq \overline{\m^{n} I^{ln}} \subseteq \overline{\left( \m^{\left\lfloor \frac{n}{l} \right\rfloor} I^{n} \right)^l}.$$
Thus $f \in \overline{\m^{\left\lfloor \frac{n}{l} \right\rfloor} I^{n}}$. We have now shown that
$$I^{(cn)} \subseteq \overline{\m^{\left\lfloor \frac{n}{l} \right\rfloor} I^{n}}$$
for all $n \geq 1$,	
\end{proof}

In fact, the conclusion of the above theorem is stronger than what we need to apply \Cref{lemma cn in mI implies expected resurgence} and conclude that $I$ has expected resurgence.

\begin{corollary}\label{lemma noetherian symbolic rees algebra implies expected resurgence}
Let $I$ be a homogeneous ideal of big height $c \geq 2$ in a standard graded regular ring $R$ with homogeneous maximal ideal $\m$, or in a regular local ring $(R, \m)$. Suppose that:
\begin{enumerate}[a)]
\item $I^{(n)} = \left( I^n : \m^\infty \right)$ for all $n \geqslant 1$;
\item $I^{(cn)} \subseteq \overline{\m I^n}$ for all $n \gg 0$; and 
\item the symbolic Rees algebra of $I$ is noetherian. 
\end{enumerate}
Then the resurgence of $I$ is expected.
\end{corollary}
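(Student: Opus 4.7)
The plan is to observe that this corollary follows almost immediately by chaining the preceding theorem with Lemma \ref{lemma cn in mI implies expected resurgence}, so the proof is essentially a two-step reduction with no real new content.

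First, I would invoke the theorem proved just before this corollary. Hypotheses (b) and (c) are precisely its input assumptions, so it produces an integer $l \geq 1$ such that
$$I^{(cn)} \subseteq \overline{\m^{\lfloor n/l \rfloor} I^n}$$
for every $n \geq 1$. Note that I do not need the strong ``for all $n \geq 1$'' form here; the ``for all $n \gg 0$'' form is enough, since the lemma to be applied next only requires the containment asymptotically. The key point is that the hypothesis (c) about noetherianity of the symbolic Rees algebra has been leveraged to upgrade the single power of $\m$ in (b) into a growing power $\m^{\lfloor n/l \rfloor}$.

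Second, I would apply Lemma \ref{lemma cn in mI implies expected resurgence} with the choice $\alpha = l$. The hypothesis (a), $I^{(n)} = (I^n : \m^\infty)$, is exactly what is needed to invoke that lemma, and the conclusion of Step 1 provides the containment $I^{(cn)} \subseteq \overline{\m^{\lfloor n/\alpha \rfloor} I^n}$ required in its statement. The lemma then delivers $\rho(I) < c$, which is the ``expected resurgence'' conclusion.

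There is no real obstacle here: the genuine work has already been done in the preceding theorem (where noetherianity of the symbolic Rees algebra is used to obtain $I^{(ln)} = (I^{(l)})^n$ and bootstrap containment (b) into a containment with growing powers of $\m$) and in Lemma \ref{lemma cn in mI implies expected resurgence} (where the integral-closure machinery and the Rees-valuation argument convert the containment with a growing power of $\m$ into a strict inequality $\rho(I) < c$). The only thing to check when writing the argument is that the hypotheses line up correctly between the two results, in particular that hypothesis (a) is shared by both and that both versions (graded and local) of the ambient ring are handled simultaneously, which they are since both cited statements are formulated in exactly the same generality as the corollary.
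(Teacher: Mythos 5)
Your proposal is correct and matches the paper's argument exactly: the paper proves this corollary in one line by citing the preceding theorem (to get $I^{(cn)} \subseteq \overline{\m^{\lfloor n/l\rfloor} I^n}$ from hypotheses (b) and (c)) and then applying Lemma \ref{lemma cn in mI implies expected resurgence} using hypothesis (a). Your observation that only the $n\gg 0$ form is needed is also accurate, though moot since the theorem delivers the stronger version.
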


\begin{proof}
The result follows from the previous theorem and Lemma \ref{lemma cn in mI implies expected resurgence}.
\end{proof}


\section{Resurgence in Characteristic Zero for Ideals Defining Gorenstein Rings}\label{section char 0}

We now apply the results from Section \ref{section expected resurgence} to ideals defining Gorenstein rings in equicharacteristic $0$.

\begin{theorem}\label{theorem Gorenstein char 0}
Let $I$ be a homogeneous ideal of height $c \geq 2$ in a standard graded regular ring $R$ containing a field $k$ of characteristic $0$ and with homogeneous maximal ideal $\m$, or an ideal in a regular local ring $R$ of characteristic $p$ with maximal ideal $\m$. If $R/I$ is Gorenstein, then for all $n\geq 1$ we have
$$I^{(cn)} \subseteq \m I^n.$$
\end{theorem}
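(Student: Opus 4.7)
The plan is to deduce Theorem \ref{theorem Gorenstein char 0} from the second conclusion of its positive-characteristic counterpart, Theorem \ref{charpGorenstein}, by the standard reduction-to-characteristic-$p$ technique. In contrast to the Frobenius-power containment $I^{(cq-c+1)}\subseteq \m I^{[q]}$ from Theorem \ref{charpGorenstein}, whose statement is intrinsically characteristic-dependent, the target containment $I^{(cn)}\subseteq \m I^n$ is a finite-type statement for each fixed $n$: it is an inclusion of one finitely generated ideal in another, and therefore descends to a dense open set of closed fibers in a spread-out family.

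Fix $n\geq 1$. It suffices to prove $f\in \m I^n$ for each element $f$ of a finite generating set of $I^{(cn)}$. First I would choose a witness $s\in R\setminus \bigcup_{P\in \Min(I)}P$ with $sf\in I^{cn}$, and fix an explicit expression $sf=\sum_\ell a_\ell g_\ell$ with $g_\ell\in I^{cn}$; I would also fix a finite free resolution $F_\bullet$ of $R/I$. In the standard graded setting, let $A\subseteq k$ be a finitely generated $\mathbb{Z}$-subalgebra containing all coefficients in $f$, $s$, the $a_\ell$, the $g_\ell$, generators of $I$, and the matrices of $F_\bullet$, and set $R_A = A[x_0,\ldots,x_N]$. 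Standard spread-out results (as developed by Hochster--Huneke for tight closure in equal characteristic zero) then ensure that on a dense open subset $U\subseteq \Spec A$, each closed fiber $R_\mu = R_A\otimes_A A/\mu$ is a regular graded ring in characteristic $p(\mu)>0$, $\Ht I_\mu = c$, the specialized complex $F_\bullet\otimes_A A/\mu$ remains exact and self-dual so that $R_\mu/I_\mu$ is Gorenstein, and $s_\mu$ still avoids every minimal prime of $I_\mu$, so that $f_\mu\in (I_\mu)^{(cn)}$. Theorem \ref{charpGorenstein} then yields $f_\mu\in \m_\mu (I_\mu)^n$ for every $\mu\in U$. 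Since $f$ lives in finitely many graded pieces of $R_A$ and each graded piece of $R_A/\m_A I_A^n$ is a finitely generated $A$-module, a generic-freeness argument applied to the direct sum of the relevant graded pieces shows that if $f\notin \m I^n$ in $R = R_A\otimes_A k$, then $f_\mu\notin \m_\mu I_\mu^n$ in $R_\mu$ for a dense open set of $\mu$, contradicting the previous sentence. The regular local case reduces to this essentially-of-finite-type situation by first passing to the $\m$-adic completion and applying Cohen's structure theorem to realize $R$ as a power series ring over a subfield.

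The main obstacle is that symbolic powers do not in general commute with reduction modulo $p$: the equality $(I^{(cn)})_\mu = (I_\mu)^{(cn)}$ can fail. The witness trick above is precisely what sidesteps this, because membership in the symbolic power has been recast as the pair of polynomial conditions $sf\in I^{cn}$ and $s\notin P$ for each $P\in \Min(I)$, and both conditions specialize cleanly on a dense open set of $\Max A$ without requiring a global identity between symbolic powers before and after specialization. A secondary but standard point is descent of the Gorenstein hypothesis, handled by spreading out the finite free resolution of $R/I$ and invoking openness of exactness for a bounded complex of free modules in a flat family, together with preservation of self-duality of that resolution under base change, which gives the Gorenstein property of $R_\mu/I_\mu$ on a dense open subset of $\Spec A$.
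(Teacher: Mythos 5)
Your proposal follows essentially the same reduction-to-characteristic-$p$ strategy as the paper: spread out the data over a finitely generated $\mathbb{Z}$-algebra $A$, preserve the Gorenstein hypothesis by spreading out and specializing the finite free resolution of $R/I$, apply Theorem~\ref{charpGorenstein} in the closed fibers, and descend the containment $I^{(cn)}\subseteq \m I^n$ back to characteristic $0$. Your witness-element device ($sf\in I^{cn}$ with $s$ avoiding the minimal primes of $I$, which for the Cohen--Macaulay quotient $R/I$ are exactly the associated primes) is a clean and explicit way to descend symbolic-power membership without needing $(I^{(cn)})_\mu = (I_\mu)^{(cn)}$; the paper instead relies on the general descent machinery of Hochster--Huneke and the template of \cite[Theorem 4.4]{comparison}, but the underlying mechanism is the same.

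There is, however, a gap in your handling of the regular local case. You assert that one ``reduces to this essentially-of-finite-type situation by first passing to the $\m$-adic completion and applying Cohen's structure theorem to realize $R$ as a power series ring over a subfield.'' Passing to $\widehat{R}$ is fine (faithful flatness lets you descend the containment from $\widehat{R}$ to $R$), and Cohen's theorem does give $\widehat{R}\cong K\llbracket x_1,\dots,x_d\rrbracket$, but a power series ring over a field is \emph{not} essentially of finite type over that field, so your spread-out argument does not yet apply. One needs an additional step---the paper invokes Artin approximation, following \cite[Theorem 4.4]{comparison}, to replace a putative counterexample in the power series ring by one in a ring essentially of finite type over $K$, while preserving the shape of the resolution (hence the Gorenstein property) and the height. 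Without this, or some equivalent limit argument in the style of Hochster--Huneke's general equal-characteristic-zero descent theory, the local case is not reduced to the finite-type case as claimed.
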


\begin{proof}
 Suppose that the theorem is false, and pick $n$ and $f$ such that $f \in I^{(cn)}$ but $f \notin \m I^n$.  First assume we are in either the graded case, or the case in which $R$ is essentially of finite type over
$k$. We will reach a contradiction  via standard reduction to characteristic $p$ techniques, similarly to \cite[Theorem 4.4]{comparison}. Using the standard descent theory of \cite[Chapter 2]{HHCharZero}, we replace the field $k$ by a finitely generated $\mathbb{Z}$-algebra $A$, so that we have a counterexample for this containment in an affine $A$-algebra $R_A$ over $A$ with $R_A \subseteq R$ and $R \cong k \otimes_A R_A$; after localizing at a nonzero element of $A$, we may assume that $A$ is smooth over $\mathbb{Z}$. We also retain models $I_A$ of $I$ and $\m_A$ of $\m$, such that $R_A/I_A$ is Gorenstein and such that there is an
element $f_A\in I_A^{(cn)}$ such that $f_A\notin \m_AI_A^n$. The result now follows from the fact that for almost all fibers, the containment holds for the map $\mathbb{Z} \longrightarrow R_A$ after passing to fibers over closed points of $\Spec (\mathbb{Z})$. Suppose that isn't so. Further using \cite[Theorem 2.3.15]{HHCharZero} we now have a regular ring $R_p$ containing a field $k_p$ of characteristic $p$ and with (homogeneous) maximal ideal $\m_p$, an ideal $I_p$ of height $c$ such that $R_p/I_p$ is Gorenstein, and $f_p \in I_p^{(cn)}$ such that $f_p \notin \m_p I_p^n$.
This contradicts the conclusion of Theorem \ref{charpGorenstein}.
This shows that for our original $I$, in equicharacteristic $0$, and for all $n \geqslant 1$, $I^{(cn)} \subseteq \m I^n$. 

In the general case we also follow the detailed proof in \cite{comparison}.  The point is that after completing and using Artin-Approximation, we can descend a counterexample to the desired containment.  The only
difference in this case from a similar result in \cite{comparison} is the added condition that $R/I$ be Gorenstein. However, as described in Chapter 2 of \cite{HHCharZero}, we can preserve the shape of resolutions as
we descend, and in particular can preserve the Betti numbers in a resolution of  $R/I$ over $R$.  Since the height does not change (even though the dimension of ambient regular ring may change), we can preserve
the fact that $R/I$ is Gorenstein. This reduces to the case in which $R$ is essentially of finite type over $k$, and the first part of the proof does this case.
\end{proof}

\begin{corollary} Let $I$ be a homogeneous ideal of height $c \geq 2$ in a standard graded regular ring $R$ containing a field $k$ of characteristic $0$ and with homogeneous maximal ideal $\m$, or an ideal in a regular local ring $R$ of characteristic $p$ with maximal ideal $\m$. Assume:
	\begin{enumerate}[a)]
		\item $I^{(n)}=(I^n:\m^\infty)$ for all $n\geq 1$,
		\item the symbolic Rees algebra of $I$ is noetherian and
		\item $R/I$ is Gorenstein.
	\end{enumerate}
Then the resurgence of $I$ is expected.
\end{corollary}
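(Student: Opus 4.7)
The plan is to combine two pieces of machinery already established in the paper: \Cref{theorem Gorenstein char 0}, which produces the containment $I^{(cn)} \subseteq \m I^n$ from the Gorenstein hypothesis alone, and \Cref{lemma noetherian symbolic rees algebra implies expected resurgence}, which converts such a containment (together with the noetherian symbolic Rees algebra and the colon-saturation condition) into expected resurgence. Since all three hypotheses of the current corollary directly match what these previous results need, the proof will be essentially a one-line application.

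More precisely, the first step is to invoke \Cref{theorem Gorenstein char 0}, whose hypotheses require only that $R/I$ be Gorenstein and that $R$ contains a field (the statement of \Cref{theorem Gorenstein char 0} covers both the characteristic $0$ case and, by appealing back to \Cref{charpGorenstein}, the prime characteristic case). This yields
\[
I^{(cn)} \subseteq \m I^n \quad \text{for all } n \geqslant 1.
\]
Since $\m I^n \subseteq \overline{\m I^n}$ holds trivially, hypothesis b) of \Cref{lemma noetherian symbolic rees algebra implies expected resurgence} is verified.

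The second step is to observe that the remaining hypotheses of \Cref{lemma noetherian symbolic rees algebra implies expected resurgence}, namely $I^{(n)} = (I^n : \m^\infty)$ and the noetherianity of the symbolic Rees algebra of $I$, are precisely hypotheses a) and b) of the present corollary. Applying \Cref{lemma noetherian symbolic rees algebra implies expected resurgence} then immediately gives $\rho(I) < c$, i.e.\ the resurgence of $I$ is expected.

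There is no real obstacle here, since the two theorems have been set up to dovetail: the Gorenstein hypothesis supplies the containment into $\m I^n$ (the one extra power of $\m$ that the argument of \cite{TakagiYoshida} cannot give), and the noetherianity of $\mathcal{R}_s(I)$ allows one to bootstrap this to a containment into $\overline{\m^{\lfloor n/l \rfloor} I^n}$, which is the hypothesis required by \Cref{lemma cn in mI implies expected resurgence}. The only conceptual point worth noting in the writeup is that the corollary covers both equicharacteristic $0$ and prime characteristic uniformly, since \Cref{theorem Gorenstein char 0} is stated for characteristic $0$ but the prime characteristic case is handled by the second conclusion of \Cref{charpGorenstein}.
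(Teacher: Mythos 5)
Your proposal is correct and follows essentially the same route as the paper: invoke Theorem \ref{theorem Gorenstein char 0} to obtain $I^{(cn)} \subseteq \m I^n$ for all $n \geqslant 1$, observe that this trivially implies $I^{(cn)} \subseteq \overline{\m I^n}$, and then apply Corollary \ref{lemma noetherian symbolic rees algebra implies expected resurgence} using the remaining hypotheses. Your remark that the prime characteristic case is ultimately supplied by Theorem \ref{charpGorenstein} is consistent with how the paper's Theorem \ref{theorem Gorenstein char 0} is set up, so there is no gap.
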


\begin{proof} Recall that our assumption that the symbolic Rees algebra is noetherian implies that there exists $l$ such that $I^{(ln)} = \left( I^{(l)} \right)^n$ for all $n \geqslant 1$. By Theorem \ref{theorem Gorenstein char 0}, our conditions imply that for all $n \geqslant 1$, $I^{(cn)} \subseteq \m I^n$. Then by \Cref{lemma noetherian symbolic rees algebra implies expected resurgence}  $I$ has expected resurgence.  \end{proof}

As we observed earlier, if $I$ has expected resurgence, then $I$ satisfies the stable Harbourne conjecture, which is an apparently much weaker condition.  However, in characteristic $0$, the assumption that the symbolic Rees algebra is noetherian gives the converse, so that the conditions of expected resurgence and stable Harbourne are in fact equivalent:

\begin{theorem}\label{thm Gorenstein expected resurgence}
Let $I$ be a homogeneous ideal of big height $c \geq 2$ in a standard graded regular ring $R$ with homogeneous maximal ideal $\m$, or an ideal in a power series ring over a field of characteristic $0$ with
maximal ideal $\m$. Suppose that:
\begin{enumerate}[a)]
\item $I^{(n)} = \left( I^n : \m^\infty \right)$ for all $n \geqslant 1$, and
\item the symbolic Rees algebra of $I$ is noetherian.
\end{enumerate}
If $I$ satisfies the stable Harbourne Conjecture, then the resurgence of $I$ is expected.
\end{theorem}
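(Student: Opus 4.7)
The plan is to invoke Corollary \ref{lemma noetherian symbolic rees algebra implies expected resurgence}. Since the saturation hypothesis (a) and the noetherian hypothesis (b) of that corollary are already given, the problem reduces to establishing its third hypothesis: $I^{(cn)} \subseteq \overline{\m I^n}$ for all $n \gg 0$. Once this is in place, the corollary will deliver $\rho(I) < c$.

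My first step is to extract from stable Harbourne the slightly weaker consequence $I^{(cn+1)} \subseteq \m I^n$ for $n \gg 0$. Indeed, applying the stable Harbourne containment with $n+1$ in place of $n$ gives $I^{(cn+1)} = I^{(c(n+1) - c + 1)} \subseteq I^{n+1}$, and since $I \subseteq \m$, we have $I^{n+1} \subseteq \m I^n$. Using the general containment $I^{(cn)} \cdot I \subseteq I^{(cn+1)}$, this yields $I^{(cn)} \cdot I \subseteq \m I^n$ for $n \gg 0$. Next I invoke the noetherian hypothesis: fix $l$ with $I^{(lm)} = (I^{(l)})^m$ for every $m \geqslant 1$, enlarging $l$ if necessary so that the previous containment applies at $n = l$. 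Then $(I^{(l)})^c \cdot I \subseteq \m I^l$, and raising to the $k$-th power while using the Veronese identity gives
\[
I^{(clk)} \cdot I^k \;=\; (I^{(l)})^{ck} \cdot I^k \;\subseteq\; \m^k \, I^{lk} \quad \text{for all } k \geqslant 1.
\]

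The delicate step, which I expect to be the main obstacle, is promoting this product containment to the membership $I^{(clk)} \subseteq \overline{\m I^{lk}}$ itself. A direct Rees-valuation comparison falls short by a factor of $v(I)$: the inclusion $I \subseteq \m$ forces $v(\m) \leqslant v(I)$ on every Rees valuation, and the best that $f \cdot I^k \subseteq \m^k I^{lk}$ yields by this route is $v(f) \geqslant k v(\m) + (l-1)k v(I)$, which gives only the weaker containment $I^{(clk)} \subseteq \overline{\m^k I^{k(l-1)}}$. To bridge this gap, I would combine the uniform Artin--Rees type inequality supplied by the saturation hypothesis (a), namely $\m^{Ln} I^{(n)} \subseteq I^n$, with a limiting integral-dependence argument using the full family of containments $f \cdot I^k \subseteq \m^k I^{lk}$ as $k \to \infty$, exploiting the noetherian Veronese structure to control the higher-order terms. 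Alternatively, one may revisit the proof of Lemma \ref{lemma cn in mI implies expected resurgence} and generalize its hypothesis to accept the fractional-exponent form $I^{(cn)} \subseteq \overline{\m^{n/l} I^{(l-1)n/l}}$ that our derivation provides directly by substituting $n = lk$; the combinatorial exponent analysis in that lemma's proof should carry through after rebalancing the parameters $\alpha$, $b$, and $r$ to absorb the factor $(l-1)/l < 1$ in the exponent of $I$.

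Once the required containment is established for $n$ in the arithmetic progression of multiples of $l$, the Veronese extension already used inside the proof of Corollary \ref{lemma noetherian symbolic rees algebra implies expected resurgence} propagates it to all $n \gg 0$, and the corollary then delivers $\rho(I) < c$, as desired.
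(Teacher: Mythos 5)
Your reduction to Corollary~\ref{lemma noetherian symbolic rees algebra implies expected resurgence} is the right frame, and the goal of establishing $I^{(cn)} \subseteq \overline{\m I^n}$ for $n \gg 0$ is exactly what the paper does. But the route you take to that containment has a genuine gap, one you yourself flag, and the alternatives you sketch at the end do not close it.

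From stable Harbourne and the Veronese property you derive $I^{(clk)} \cdot I^k \subseteq \m^k I^{lk}$. As you note, dividing by $I^k$ at the level of Rees valuations only yields $v(f) \geq k\,v(\m) + (l-1)k\,v(I)$, so the best conclusion along that route is $I^{(clk)} \subseteq \overline{\m^k I^{(l-1)k}}$, i.e., $I^{(cn)} \subseteq \overline{\m^{n/l} I^{n(l-1)/l}}$ with $n = lk$. The exponent on $I$ is $sn$ with $s = (l-1)/l < 1$, and this is not a loss that a rebalance of $\alpha$, $b$, $r$ in Lemma~\ref{lemma cn in mI implies expected resurgence} can absorb: in that lemma's proof the crucial inequality is $ctn + bstn > (rn+d+k)(b+1)$ with $t < r$, and letting $b \to \infty$ this forces $st > r > t$, which is impossible for $s < 1$. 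So the second alternative fails structurally, and the first ("combine the uniform Artin--Rees inequality... with a limiting integral-dependence argument") is not an argument.

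The idea you are missing is the characteristic-$0$ input. In equicharacteristic $0$, any $f$ is integral over $\m\,D(f)$, where $D(f)$ is the ideal of partial derivatives of $f$, and by the Zariski--Nagata description of symbolic powers differentiation drops symbolic order by one, so $D(I^{(n)}) \subseteq I^{(n-1)}$. Together these give $I^{(n)} \subseteq \overline{\m I^{(n-1)}}$ for all $n$. Then for $n$ large enough that stable Harbourne applies,
\begin{align*}
I^{(cn)} \subseteq \overline{\m I^{(cn-1)}} \subseteq \overline{\m I^{(cn-c+1)}} \subseteq \overline{\m I^n},
\end{align*}
where the middle step uses $c \geq 2$ and the last uses stable Harbourne. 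This is the containment needed, and Corollary~\ref{lemma noetherian symbolic rees algebra implies expected resurgence} finishes. Your Veronese manipulation is not needed once this derivative step is available; without it, the chain of containments you build cannot recover the lost power of $I$.
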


\begin{proof}
We claim that for all $n \gg 0$, 
$$I^{(cn)} \subseteq \overline{\m I^n}.$$
First, note that in characteristic $0$, any element $f$ is integral over $\m D(f)$, where $D(f)$ is the ideal generated by the partial derivatives of $f$. In particular, $I^{(n)} \subseteq \overline{\m I^{(n-1)}}$ for all $n$.
Suppose that $n$ is large enough for stable Harbourne to hold, meaning $I^{(cn-c+1)} \subseteq I^n$. Then
\begin{align*}
I^{(cn)} & \subseteq \overline{\m I^{(cn-1)}} & \textrm{as we just showed} \\
& \subseteq \overline{\m I^{(cn-c+1)}} & \textrm{because } c \geqslant 2 \\
& \subseteq \overline{\m I^n} & \textrm{by assumption}
\end{align*}
This shows we can apply \Cref{lemma noetherian symbolic rees algebra implies expected resurgence}, and $I$ must have expected resurgence.
\end{proof}

\begin{remark}
In fact, the proof above shows that under these assumptions, expected resurgence is equivalent to $I^{(cn-1)} \subseteq I^n$ for $n \gg 0$. Essentially, this says that $I$ has expected resurgence if and only if $I^{(cn)} \subseteq I^n$ is not the best possible containment as $n \longrightarrow \infty$.
\end{remark}

\section*{Acknowledgements}
This work was done while the first author was on a year long visit to the University of Utah; she warmly thanks the University of Utah Math Department for their hospitality. The first author is supported by NSF grant DMS-2001445.

\bibliographystyle{alpha}
\bibliography{References}

\newcommand{\etalchar}[1]{$^{#1}$}
\begin{thebibliography}{BDRH{\etalchar{+}}18}

\bibitem[BCP99]{BaChaPop}
Dave Bayer, Hara Charalambous, and Sorin Popescu.
\newblock Extremal {B}etti numbers and applications to monomial ideals.
\newblock {\em J. Algebra}, 221(2):497--512, 1999.

\bibitem[BDRH{\etalchar{+}}09]{Seshadri}
Thomas Bauer, Sandra Di~Rocco, Brian Harbourne, Micha\l Kapustka, Andreas
  Knutsen, Wioletta Syzdek, and Tomasz Szemberg.
\newblock A primer on {S}eshadri constants.
\newblock In {\em Interactions of classical and numerical algebraic geometry},
  volume 496 of {\em Contemp. Math.}, pages 33--70. Amer. Math. Soc.,
  Providence, RI, 2009.

\bibitem[BDRH{\etalchar{+}}18]{ResurgenceKleinWiman}
Thomas Bauer, Sandra Di~Rocco, Brian Harbourne, Jack Huizenga, Alexandra
  Seceleanu, and Tomasz Szemberg.
\newblock Negative curves on symmetric blowups of the projective plane,
  resurgences, and waldschmidt constants.
\newblock {\em Int. Math. Res. Not. IMRN}, 2019(24):7459--7514, 02 2018.

\bibitem[BGHN20]{ChudnovskyGeneralPoints}
Sankhaneel Bisui, Elo\'isa Grifo, Huy~T\`ai H\`a, and Th\'ai~Th\`anh
  Nguy$\tilde{\text{\^e}}$n.
\newblock Chudnovsky's conjecture and the stable {H}arbourne-{H}uneke
  containment.
\newblock {\em arXiv:2004.11213}, 2020.

\bibitem[BH10a]{BoH}
Cristiano Bocci and Brian Harbourne.
\newblock Comparing powers and symbolic powers of ideals.
\newblock {\em J. Algebraic Geom.}, 19(3):399--417, 2010.

\bibitem[BH10b]{resurgence2}
Cristiano Bocci and Brian Harbourne.
\newblock The resurgence of ideals of points and the containment problem.
\newblock {\em Proc. Amer. Math. Soc.}, 138, no. 4:175--1190, 2010.

\bibitem[CGM{\etalchar{+}}16]{RealsCounterexample}
Adam Czapli\'nski, Agata G\l\'{o}wka, Grzegorz Malara, Magdalena
  Lampa-Baczy\'{n}ska, Patrycja {\L}uszcz-\'Swidecka, Piotr Pokora, and Justyna
  Szpond.
\newblock A counterexample to the containment {$I^{(3)}\subset I^2$} over the
  reals.
\newblock {\em Adv. Geom.}, 16(1):77--82, 2016.

\bibitem[Cut91]{Cutkosky}
Steven~Dale Cutkosky.
\newblock Symbolic algebras of monomial primes.
\newblock {\em J. Reine Angew. Math.}, 416:71--89, 1991.

\bibitem[DD20]{DiPasqualeDrabkin}
Michael DiPasquale and Ben Drabkin.
\newblock The resurgence of an ideal with noetherian symbolic {R}ees algebra is
  rational.
\newblock {\em arXiv:2003.06980}, 2020.

\bibitem[DFMS19]{AsymptoticResurgenceIntClosure}
Michael DiPasquale, Christopher~A. Francisco, Jeffrey Mermin, and Jay Schweig.
\newblock Asymptotic resurgence via integral closures.
\newblock {\em Trans. Amer. Math. Soc.}, 372(9):6655--6676, 2019.

\bibitem[DHN{\etalchar{+}}15]{DHNSST2015}
M.~Dumnicki, B.~Harbourne, U.~Nagel, A.~Seceleanu, T.~Szemberg, and
  H.~Tutaj-Gasi{\'n}ska.
\newblock Resurgences for ideals of special point configurations in
  {$\bold{P}^N$} coming from hyperplane arrangements.
\newblock {\em J. Algebra}, 443:383--394, 2015.

\bibitem[Dra17]{BenCounterexample}
Ben Drabkin.
\newblock Configurations of linear spaces of codimension two and the
  containment problem, 2017.
\newblock arXiv:1704.07870.

\bibitem[DS20]{DrabkinSeceleanu}
Ben Drabkin and Alexandra Seceleanu.
\newblock Singular loci of reflection arrangements and the containment problem.
\newblock {\em arXiv:2002.05353}, 2020.

\bibitem[DSTG13]{counterexamples}
Marcin Dumnicki, Tomasz Szemberg, and Halszka Tutaj-Gasi{\'n}ska.
\newblock Counterexamples to the {$I^{(3)}\subseteq I^2$} containment.
\newblock {\em Journal of Algebra}, 393:24--29, 2013.

\bibitem[Dum15]{Dumnicki2015}
Marcin Dumnicki.
\newblock Containments of symbolic powers of ideals of generic points in {$\Bbb
  P^3$}.
\newblock {\em Proc. Amer. Math. Soc.}, 143(2):513--530, 2015.

\bibitem[ELS01]{ELS}
Lawrence Ein, Robert Lazarsfeld, and Karen~E. Smith.
\newblock {Uniform bounds and symbolic powers on smooth varieties}.
\newblock {\em Inventiones Math}, 144 (2):241--25, 2001.

\bibitem[GH19]{GrifoHuneke}
Elo\'isa Grifo and Craig Huneke.
\newblock Symbolic powers of ideals defining {F}-pure and strongly {F}-regular
  rings.
\newblock {\em Int. Math. Res. Not. IMRN}, (10):2999--3014, 2019.

\bibitem[GHM]{GHM}
Elo\'isa Grifo, Craig Huneke, and Vivek Mukundan.
\newblock Expected resurgences and symbolic powers of ideals.
\newblock {\em Journal of the London Mathematical Society},
  https://doi.org/10.1112/jlms.12324.

\bibitem[GHVT13]{AsymptoticResurgence}
Elena Guardo, Brian Harbourne, and Adam Van~Tuyl.
\newblock Asymptotic resurgences for ideals of positive dimensional subschemes
  of projective space.
\newblock {\em Adv. Math.}, 246:114--127, 2013.

\bibitem[GMS19]{GrifoMaSchwede}
Elo\'isa Grifo, Linquan Ma, and Karl Schwede.
\newblock Symbolic power containments in singular rings in positive
  characteristic.
\newblock {\em arXiv:1911.06307}, 2019.

\bibitem[GNW94]{NonnoetherianSymb}
Shiro Goto, Koji Nishida, and Keiichi Watanabe.
\newblock Non-{C}ohen-{M}acaulay symbolic blow-ups for space monomial curves
  and counterexamples to {C}owsik's question.
\newblock {\em Proceedings of the American Mathematical Society},
  120(2):383--392, 1994.

\bibitem[Gri18]{mythesis}
Elo\'isa Grifo.
\newblock Symbolic powers and the containment problem.
\newblock {\em PhD Thesis}, 2018.

\bibitem[Gri20]{GrifoStable}
Elo\'{i}sa Grifo.
\newblock A stable version of {H}arbourne's {C}onjecture and the containment
  problem for space monomial curves.
\newblock {\em J. Pure Appl. Algebra}, 224(12):106435, 23, 2020.

\bibitem[HH99]{HHCharZero}
Melvin Hochster and Craig Huneke.
\newblock {Tight closure in equal characteristic zero}.
\newblock 1999.

\bibitem[HH02]{comparison}
Melvin Hochster and Craig Huneke.
\newblock Comparison of symbolic and ordinary powers of ideals.
\newblock {\em Invent. Math.}, 147(2):349--369, 2002.

\bibitem[HH13]{HaHu}
Brian Harbourne and Craig Huneke.
\newblock Are symbolic powers highly evolved?
\newblock {\em J. Ramanujan Math. Soc.}, 28A:247--266, 2013.

\bibitem[HS15]{HaSeFermat}
Brian Harbourne and Alexandra Seceleanu.
\newblock Containment counterexamples for ideals of various configurations of
  points in {$\bold{P}^N$}.
\newblock {\em J. Pure Appl. Algebra}, 219(4):1062--1072, 2015.

\bibitem[Hun87]{HunekeHilbertSymb}
Craig Huneke.
\newblock Hilbert functions and symbolic powers.
\newblock {\em The Michigan Mathematical Journal}, 34(2):293--318, 1987.

\bibitem[KM84]{KustinMillerLinkage}
Andrew~R. Kustin and Matthew Miller.
\newblock Deformation and linkage of {G}orenstein algebras.
\newblock {\em Trans. Amer. Math. Soc.}, 284(2):501--534, 1984.

\bibitem[Lyu88]{LyuArithMon}
Gennady Lyubeznik.
\newblock {On the arithmetical rank of monomial ideals}.
\newblock {\em J. Algebra}, 112(1):86--89, 1988.

\bibitem[Mor91]{Morales}
Marcel Morales.
\newblock Noetherian symbolic blow-ups.
\newblock {\em J. Algebra}, 140(1):12--25, 1991.

\bibitem[MS18a]{MaSchwede}
Linquan Ma and Karl Schwede.
\newblock Perfectoid multiplier/test ideals in regular rings and bounds on
  symbolic powers.
\newblock {\em Invent. Math.}, 214(2):913--955, 2018.

\bibitem[MS18b]{MalaraSzpond}
Grzegorz Malara and Justyna Szpond.
\newblock On codimension two flats in {F}ermat-type arrangements.
\newblock In {\em Multigraded algebra and applications}, volume 238 of {\em
  Springer Proc. Math. Stat.}, pages 95--109. Springer, Cham, 2018.

\bibitem[PS73]{P-S}
C.~Peskine and L.~Szpiro.
\newblock {Dimension projective finie et cohomologie locale. {A}pplications
  {\`a} la d{\'e}monstration de conjectures de {M}. {A}uslander, {H}. {B}ass et
  {A}. {G}rothendieck}.
\newblock {\em Inst. Hautes {\'E}tudes Sci. Publ. Math.}, (42):47--119, 1973.

\bibitem[Ree58]{Rees1958ProblemZariski}
David Rees.
\newblock On a problem of {Z}ariski.
\newblock {\em Illinois Journal of Mathematics}, 2(1):145--149, 1958.

\bibitem[Rob85]{RobertsExample}
Paul~C. Roberts.
\newblock A prime ideal in a polynomial ring whose symbolic blow-up is not
  {N}oetherian.
\newblock {\em Proc. Amer. Math. Soc.}, 94(4):589--592, 1985.

\bibitem[SH06]{HunekeSwansonIntegral2006}
Irena Swanson and Craig Huneke.
\newblock {\em Integral closure of ideals, rings, and modules}, volume~13.
\newblock Cambridge University Press, 2006.

\bibitem[Swa97]{Swanson97}
Irena Swanson.
\newblock Powers of ideals. {P}rimary decompositions, {A}rtin-{R}ees lemma and
  regularity.
\newblock {\em Math. Ann.}, 307(2):299--313, 1997.

\bibitem[TX20]{StefanYuXie}
\c{S}tefan~O. Toh\v{a}neanu and Yu~Xie.
\newblock On the containment problem for fat points ideals and {H}arbourne's
  conjecture.
\newblock {\em Proc. Amer. Math. Soc.}, 148(6):2411--2419, 2020.

\bibitem[TY08]{TakagiYoshida}
Shunsuke Takagi and Ken-ichi Yoshida.
\newblock Generalized test ideals and symbolic powers.
\newblock {\em Michigan Math. J.}, 57:711--724, 2008.
\newblock Special volume in honor of Melvin Hochster.

\end{thebibliography}
\end{document}